\newcommand{\bigtimes}{\mathop{\,\mbox{\parbox[c][9pt][b]{18pt}{\fontsize{18}{18}\selectfont{$\times$}}}\!\!}}
\newtheorem{theorem}{Theorem}[section]
\newtheorem{lemma}[theorem]{Lemma}
\newtheorem{proposition}[theorem]{Proposition}
\newcommand{\bigforall}{\mathop{\hspace*{3pt}\rule[3pt]{5pt}{0.5pt}\hspace*{-10pt}\bigvee}_{i=1}^M}
\newcommand{\bigforalll}{\mathop{\hspace*{3pt}\rule[3pt]{4pt}{0.5pt}\hspace*{-8pt}\bigvee}_{i=1}^M}
\newcommand{\kk}{\kappa}
\newcommand{\G}{\Gamma}
\newcommand{\h}{\eta}
\newcommand{\R}{{\mathbb R}} 
\newcommand{\N}{{\mathbb N}} 
\newcommand{\Z}{{\mathbb Z}} 
\newcommand{\E}{{\mathbb E}} 
\newcommand{\un}[1]{\underline #1}
\newcommand{\BB}{{\mathcal B}}
\newcommand{\DD}{{\mathcal D}}
\newcommand{\EE}{{\mathcal E}}
\newcommand{\FF}{{\mathcal F}}
\newcommand{\GG}{{\mathcal G}}
\newcommand{\MM}{{\mathcal M}}
\newcommand{\NN}{{\mathcal N}}
\newcommand{\OO}{{\mathcal O}}
\newcommand{\QQ}{{\mathcal Q}}
\newcommand{\s}{{\sigma}}
\newcommand{\m}{{\mu}}
\newcommand{\cA}{{\mathcal A}}
\newcommand{\cB}{{\mathcal B}}
\newcommand{\cD}{{\mathcal D}}
\newcommand{\cF}{{\mathcal F}}
\newcommand{\cM}{{\mathcal M}}
\newcommand{\cN}{{\mathcal N}}
\newcommand{\cQ}{{\mathcal Q}}
\newcommand{\cS}{{\mathcal S}}
\newcommand{\capa}{\operatorname{cap}}
\newcommand{\wt}{\widetilde}
\newcommand{\bmm}{{\underline m}}
\newcommand{\bA}{{\mathbf A}}
\newcommand{\df}{\stackrel{\Delta}{=}}
\newcommand{\so}{o(1)}
\newcommand{\vm}{\bmm}
\newcommand{\ui}{\underline{i}}
\begin{document}
\begin{frontmatter}

\title{Pointwise estimates and exponential laws in metastable
systems via coupling methods\thanksref{T1,T2}}
\runtitle{Exponential laws via coupling}

\thankstext{T1}{Supported in part through a~grant by
the German--Israeli Foundation (GIF).}
\thankstext{T2}{Supported in part by the DFG in the SFB 611 and
the Hausdorff Center for Mathematics.}

\begin{aug}
\author[A]{\fnms{Alessandra} \snm{Bianchi}\ead[label=e1]{alessandra.bianchi7@unibo.it}},
\author[B]{\fnms{Anton} \snm{Bovier}\ead[label=e2]{bovier@uni-bonn.de}}
and
\author[C]{\fnms{Dmitry} \snm{Ioffe}\corref{}\ead[label=e3]{ieioffe@technion.ac.il}}
\runauthor{A. Bianchi, A. Bovier and D. Ioffe}
\affiliation{Weierstrass-Institute for Applied Analysis and
Stochastics, Institut f\"{u}r Angewandte Mathematik, Rheinische
Friedrich--Wilhelms-Universit\"{a}t and Faculty of Industrial
Engineering and
Management, Technion}
\address[A]{A. Bianchi\\
Weierstrass-Institute for Applied Analysis\\
\quad and Stochastics\\
Mohrenstrasse 39\\
10117 Berlin\\
Germany\\
\printead{e1}}
\address[B]{A. Bovier\\
Institut f\"{u}r Angewandte Mathematik\\
Rheinische Friedrich--Wilhelms-Universit\"{a}t\\
Endenicher Allee 60\\
53115 Bonn\\
Germany\\
\printead{e2}}
\address[C]{D. Ioffe\\
Faculty of Industrial Engineering\\
\quad and Management\\
Technion---Israel Institute of Technology\\
Haifa 32000\\
Israel\\
\printead{e3}}
\end{aug}

\received{\smonth{3} \syear{2010}}
\revised{\smonth{9} \syear{2010}}

%
\begin{abstract}
We show how coupling techniques can be used in some metastable systems to
prove that mean metastable exit times are almost constant as functions
of the
starting microscopic configuration within a~``meta-stable set.''
In the example of the
Random Field Curie Weiss model, we show that these ideas can also be used
to prove asymptotic exponentiallity of normalized metastable escape times.
\end{abstract}

%
\begin{keyword}[class=AMS]
\kwd[Primary ]{82C44}
\kwd{60K35}
\kwd[; secondary ]{60G70}.
\end{keyword}
\begin{keyword}
\kwd{Disordered system}
\kwd{random field Curie--Weiss model}
\kwd{Glauber dynamics}
\kwd{metastability}
\kwd{potential theory}
\kwd{coupling}
\kwd{exponential law}.
\end{keyword}

\end{frontmatter}

\section{Introduction}

\subsection{The problem}\label{subsection.1.1}
Metastable systems
are characterized by the fact that the state space can be decomposed
into several disjoint subsets, with the property that transition times
between subspaces are long compared to characteristic mixing times
within each subspace.
The mathematically rigorous analysis of
Markov processes exhibiting metastable
behavior
was first developed
in the \textit{large deviation theory} of Freidlin and Wentzell \cite
{FW,OV}. This
approach yields logarithmic asymptotics of transition times and other
quantities of interest. Over the last decade, a~\textit{potential theoretic
approach}~\cite{BEGK02,Bo5} to metastability was developed that in many
instances yields more precise asymptotics, and in particular the exact
prefactors of exponential terms.

In this work we study metastability for a~class of
stochastic Ising models. The main objective is to extend the potential
theoretical
approach for deriving asymptotics of transition times\vadjust{\goodbreak}
for processes starting from individual microscopic configurations, and,
subsequently, for studying exponential scaling laws for these
transition times.

So far the existing methods work well in the following situations:
\begin{longlist}[(3)]
\item[(1)] The process is strongly recurrent in the sense that it
visits an
individual atom of the state space in each metastable state many times
with overwhelming probability \textit{before} a~metastable transition happens.
This situation occurs, for example, in Markov chains with finite state
space, and on
discrete state space, such as $\Z^d$, in the presence of a~confining potential.
\item[(2)] In models where strong symmetries allow the analysis of the dynamics
through a~\textit{lumped chain} that satisfies the requirements of (1). This
situation occurs, for example, in mean field models such as the
Curie--Weiss model~\cite{CGOV}
and the Curie--Weiss model with random magnetic fields that take only finitely
many values~\cite{MP,BEGK01}.
\item[(3)] In situations where the process returns often to small neighborhoods,
$\OO_{\varepsilon}(x)$ of points, $x$, in a~metastable state where
the oscillations of harmonic
functions on these neighborhoods can be made arbitrarily small. This is the
case in finite and some infinite-dimensional diffusion processes
\cite{MOS89,BEGK04}.
\end{longlist}

One would expect that   situation (3) also arises in a~wide variety of
stochastic Ising models or stochastic particle systems exhibiting metastable
behavior. \textit{Proving} the respective regularity properties of
microscopic harmonic
functions appears, however, to be a~difficult issue in general.

The purpose of
the present paper is to develop an approach to this
problem via coupling techniques that allow to cover at least some
interesting situations.

A key idea of the potential theoretic approach is to express quantities
of physical interest in terms of \textit{capacities} and to use
\textit{variational principles} to compute the latter. A~fundamental
identity used systematically in this approach is a~representation
formula for the Green's function, $g_B(x,y)$,
with Dirichlet conditions in a~set~$B$, that reads (in the context of
arbitrary discrete state space)
%
%
\begin{equation}\label{intro.1}
g_B(x,y) = \mu(y)\frac{h_{x,B}(y)}{\capa(x,B)},
\end{equation}
where $B$ is a~subset of the configuration space,
$h_{x,B}(y) = h_{\{ x\}, B}$ and $h_{A, B}$ is the equilibrium
potential, that is,
%
%
\begin{equation}
\label{hAB}
h_{A,B}(y) =
\cases{
1, &\quad if $y\in A$,\cr
0, &\quad if $y\in B$,\cr
{\mathbb P}_y(\tau_A <\tau_B), &\quad otherwise.}
\end{equation}
We use
\[
\tau_C = \min\{ t >0\dvtx x (t)\in C\}
\]
for the first hitting times of sets $C$, and
$\capa(A,B)$ is the capacity between the sets $A$ and $B$; $\capa
(x,B) =\capa(\{ x\},B)$.\vadjust{\goodbreak}

Equation (\ref{intro.1}) immediately leads to a~formula for the
mean hitting time $\E_x\tau_B$ of $B$, for the process starting in
$x$. However, the resulting
expression for $\E_x \tau_B$ is
useful as long as the ratio appearing in (\ref{intro.1}) is under
control and is
not seriously
of the form $0/0$.

To be more precise,
it may happen that $h_{x,B}(y)= f(A) h_{A,B}(y)$ and\break $\capa(x, B)=
f(A) \capa(A,B)$, for ``macroscopic''
sets $A\ni x$.
Then
%
%
\begin{equation}
\label{eq:symmetry}
\frac{h_{x,B}(y)}{\capa(x,B)}=\frac{h_{A,B}(y)}{\capa(A,B)},
\end{equation}
but except
in cases where (\ref{eq:symmetry}) is manifest by some symmetry, it
will be very hard to
establish
such relations by a~direct pointwise estimation of numerator and
denominator in
(\ref{intro.1}).

Examples where this problem occurs are diffusion processes in $d>1$,
Glauber dynamics in the case of finite temperature, etc. In such
cases, a~useful version can be extracted by averaging
equation (\ref{intro.1}) with respect to $x$ after multiplying both sides
by $\capa(x,B)$ over as suitable neighborhood $A\equiv A_x$. This yields
the formula
%
%
\begin{equation}\label{intro.2}
\E_{\nu_A}\tau_B =\frac{1}{\capa(A,B)}\sum_{y} h_{A,B}(y)\m(y),
\end{equation}
where $\nu_A$ is a~specific probability distribution on $A$. Actually
(\ref{intro.2}) can be derived without a~recourse to
(\ref{intro.1}): if $P$ is the transition kernel of a~reversible
Markov chain $x(t)$, then
the equilibrium potential $h_{A,B}$ is harmonic outside $A\cup B$;
$(I-P)h_{A,B} = Lh_{A,B} = 0$.
Thus,
%
%
\begin{equation}
h_{A,B} (y) = \sum_{x\in A} g_B (y ,x)L h_{A,B} (x)
\end{equation}
for all $y\notin B$. By reversibility
$\mu(y) g_B (y,x ) = \mu(x) g_B (x, y)$, and
it follows that
%
%
\begin{equation}
\sum_{y \notin A\cup B} \mu(y)h_{A,B} (y) = \sum_{x\in A}\mu(x) L
h_{A,B} (x) \E_x \tau_B ,
\end{equation}
which is (\ref{intro.2}) with $\nu_A (x) = \mu(x) L h_{A,B} (x) /
\capa(A, B)$.

The point is that the
right-hand side of (\ref{intro.2}) can be evaluated in many cases of
interest when
formula (\ref{intro.1}) suffers from the problem discussed above.
This has been demonstrated recently
in two examples, the Glauber dynamics of the random field Curie--Weiss
model at finite temperature~\cite{BBI08}, and the Kawasaki dynamics in
the zero temperature limit on volumes that diverge exponentially with
the inverse temperature~\cite{BdHS08}.

An obvious question is whether the mean hitting time of $B$
really depends on the specific initial distribution $\nu_A$ or
whether, for all $z\in A$, $\E_z\tau_B$ is equal to $ \E_{\nu_A}
\tau_B$ up to a~small error.
This question, and related one concerning other functions of initial
conditions is of much further reaching importance. In particular, it is
relevant for proving the asymptotic exponentiallity of the transition time
using approximate renewal arguments. Let us mention that the same issue
also arises in the case of diffusion equations in the Wentzell--Freidlin
regime. Here, Martinelli and Scoppola~\cite{MS88},
Martinelli, Olivieri and Scoppola
\cite{MOS89} showed that solutions
of the stochastic differential equation starting at
two different points
in a~neighborhood of a~stable equilibrium and driven by the same
noise
are converging exponentially fast to each other with probability tending
to one. From this, they deduced regularity of exit probabilities
${\mathbb P}_x[\tau_B>t\E\tau_B]$ as functions of $x$ and hence
exponentiallity of~$\tau_B$
and asymptotic independence of $\E_x\tau_B$ of the starting point
$x\in A$.
Such a~strong contraction property is, however, not
available
in stochastic Ising models on the level of microscopic paths.

In the present paper, we will
develop a~method that allows us to obtain
similar results, at least in some cases,
with
an alternative and,
weaker input. It is based on coupling
techniques and allows us to turn the following simple
heuristic argument
into a~rigorous proof: the Markov chain should mix quickly
before it leaves a~substantial neighborhood of the starting point $x$;
since the mixing time is short compared to the hitting time $\tau_B$,
the mean of $\tau_B$ should be the same for all starting configuration
in $A$. Moreover, the chain will return many times to $A$ before
reaching $B$; by rapid mixing, the return times will be
essentially
i.i.d., hence the
number of returns will be geometric, and the scaled hitting time
will be exponential.

To demonstrate the usefulness of this approach, our key example will be the
Random Field Curie--Weiss model with
continuous distribution of the random fields. In that sense, the present
result is also a~completion of our previous paper~\cite{BBI08}.
Technically, the coupling construction we employ is based on
\cite{LLP08} and still contains model dependent elements. However, the
basic ideas are more general and will be of relevance for the treatment
of a~wider range of metastable systems.

The remainder of this paper is organized as follows. In the next
subsection, we describe a~general setting of Markov chains to which our
method applies. In Section~\ref{sec13}, we state our two main theorems.
In Section~\ref{Sect.2}, we recall the definition of Glauber dynamics
for the random field Curie--Weiss model and recall the main result from
\cite{BBI08}. In Section~\ref{S1}, we recall the coupling constructed
by Levin, Luczak and Peres for the standard Curie--Weiss model and show
how this can be modified to be useful in the random field model. We
then prove Theorem~\ref{pointwise.1}. In Section~\ref{Sect.4}, we show
how to prove the asymptotic exponentiallity of the transition times and
give the proof of Theorem~\ref{exponential.1}.

\subsection{Setting} \label{subsection.1.2}

In this subsection, we describe a~general setting in which our methods
can be applied.

In the sequel, $N$ will be a~large parameter. We consider (families of)
Markov processes, $\s(t)$, with finite state space,\vadjust{\goodbreak}
$\cS_N\equiv\{-1,1\}^N$, and transition probabilities $p_N$ that are
reversible w.r.t. a~(Gibbs) measure, $\mu_N$. Transition probabilities
$p_N$ always have the following structure: at each step, a~site
$x\in\Lambda$ is chosen with uniform probability $1/N$. Then the spin
at $x$ is set to $\pm1$ with probabilities $p^{\pm}_x (\s)$; $p^{+}_x
(\s) + p^{-}_x (\s)\equiv1$. In the sequel, we shall assume that there
exists $\alpha\in[1/2,1)$ such that
%
%
\begin{equation}
\label{eq:PUpdate}
\max_{x ,\s,\pm} p^{\pm}_x (\s) \leq\alpha.
\end{equation}
A key hypothesis is the existence of a~family of ``good'' mesoscopic
approximations of our processes. By this, we mean the following: there
is a~sequence of disjoint partitions, $\{\Lambda^{n}_1,\ldots,
\Lambda^{n}_{k_n}\}$, of $\Lambda\equiv\{1,\ldots,N\}$, and a~family of
maps, $\un m^{(n)}\dvtx\cS_N\rightarrow\G_n\subset\R^{n}$, given by
%
%
\begin{equation}\label{finer.1}
m_i^n(\s) =\frac1N\sum_{x\in\Lambda^n_i}\s_x .
\end{equation}
We will always think of these partitions as nested, that is,
$\{\Lambda^{n+1}_1,\ldots,\Lambda^{n+1}_{k_{n+1}}\}$ is a~refinement of
$\{\Lambda^{n}_1,\ldots, \Lambda^{n}_{k_n}\}$. On the other hand, to
lighten the notation, we will mostly drop the superscript and identify
$k_n=n$, and refer to the generic partition
$\Lambda_1,\ldots,\Lambda_n$. It will be convenient to introduce the
notation
\[
{\mathcal S}^n[\un m]\equiv(\un m^n)^{-1}(\un m) =\{ \sigma\dvtx
\un m^n (\sigma) = \un m\}
\]
for the set-valued inverse images of $\un m^n$. We think of the maps
$\un m^n$ as some block averages of our ``microscopic'' variables
$\s_i$ over blocks of decreasing (in $n$) ``mesoscopic'' sizes.

As is well known, the image process, $\un m^{n}(\s(t))$, is in general
not Markovian. However, there is a~canonical Markov process, $\un
m^{n}(t)$, with state space~$\G_n$ and reversible measure
$\QQ_n\equiv\mu_N\circ(\un m^n)^{-1}$, that is a~``best'' approximation
of $\un m^{n}(\s(t))$, in the sense that if $\un m^{n}(\s(t))$ is
Markov, then $\un m^{n}(t)=\un m^{n}(\s(t))$ (in law). For all $\un m,
\un m'\in\G_n$, the transition probabilities of this chain are given by
%
%
\begin{equation}\label{intro.5}
r_N(\un m,\un m')\equiv\frac1{\QQ_n(\un m)}
\mathop{\sum_{\s\in{\mathcal S}^n[\un m]}}_{\s'\in{\mathcal S}^n[\un
m']} \mu_N(\s)
p_N(\s,\s').
\end{equation}
In the models, we consider here the following two assumptions are
satisfied:
\begin{longlist}[(A.2)]
\item[(A.1)] The sequence of chains $\un m^{n}(t)$ approximates $\un
m^{n}(\s(t))$ in the strong sense that there exists
${\varepsilon}(n)\downarrow0$, as $n\uparrow\infty$, such that for any
$m,m'\in\G_n$,
%
%
\begin{equation}\label{intro.6}
\mathop{\max_{{\s\in S^n[\un m],\s'\in S^{n}[\un m']}}}_{r_N(\un m,
\un m')>0} \biggl| \frac
{p_N(\s,\s'){|{\mathcal S}^n[\un m']|}}{r_N(\un m,\un m')}-1
\biggr|\leq{\varepsilon}(n).
\end{equation}
\item[(A.2)] The microscopic flip rates satisfy: if $\un m^n (\s)
=\un m^n (\h)$
and $\s_x =\h_x$, then $p^\pm_x (\s) = p^\pm_x (\h)$.\vadjust{\goodbreak}
\end{longlist}
Note that our assumption (A.1) is
much stronger then the maybe more natural looking
\[
\max_{\s\in S^n[\un m]} \biggl| \frac
{\sum_{\s'\in S^n[\un m']}p(\s,\s')}{r_N(\un m,\un m')}-1\biggr|
\leq{\varepsilon}(n).
\]

Finally, we need to place us in a~``metastable'' situation.
Specifically, we will assume that there exist two disjoint sets
$A=\{\s\in\cS_N\dvtx\un m^{n_0}(\s)\in\bA\}$ and $
B=\{\s\in\cS_N\dvtx\un m^{n_0}(\s)\in{\mathbf B}\}$, for some $n_0$ and
sets $\bA,{\mathbf B}\subseteq \G_{n_0}$, a~constant $C>0$ and a~sequence $a_n<\infty$, such that, for all $n\geq n_0$ and for all
$\s,\h\in A$,
%
%
\begin{equation}\label{intro.10}
{\mathbb P}_{\s}\bigl[\tau_B<\tau_{\un m^n(\h)}\bigr]\leq
a_n e^{ -CN},
\end{equation}
where, with a~little abuse of notation, we denote by
$\tau_{\un m^n(\h)}$ the first hitting time of the set
$\cS^n[\un m^n(\h)]$.

\subsection{Main results}\label{sec13}

In the setting outlined above, we will prove the following theorem.
\begin{theorem}\label{pointwise.1}
Consider a~Markov process as described above, and let~$A,B$ be such that
(\ref{intro.10}) holds. Then
%
%
\begin{equation}
\label{intro.11}
\max_{\s,\h\in A} \biggl| \frac
{\E_\s\tau_B}{\E_{\h}\tau_B}-1\biggr|\leq e^{-CN/2}.
\end{equation}
\end{theorem}
\begin{remark*}
Assumptions (A1) and (A2) are formulated in the context in which we
will prove our results. The restriction to the state space $\{-1,1\}^N$ is
mainly done because we need to construct an explicit coupling. It is rather
straightforward to generalize everything to the case of Potts spins
($\cS_N\equiv\{1,\ldots,q\}^N)$ and maps $ \un m^{n}$ whose
components are
permutation invariant functions of the spin variables on~$\Lambda_i^{n}$.
\end{remark*}

The claim of Theorem~\ref{pointwise.1} is trivial whenever ${\mathbb
P}_\s(\tau_\eta<\tau_B)$ is exponentially close to one, as
$N\uparrow\infty$. However, in the context of stochastic Ising models
it is reasonable to expect that, for fixed $\s,\h\in A$, ${\mathbb
P}_\s(\tau_\eta<\tau_B)$ is exponentially small. That is, despite the
fact that a~chain starting at $\s$ spends an exponentially large amount
of time in $A$, this time is not long enough for visiting more than a~small fraction of the exponentially large number of microscopic points
in $A$. An alternative approach is to try to construct a~coupling
between $\s$ and $\h$ chains. In the case of the Curie--Weiss model
(without random fields), a~useful coupling algorithm was suggested in
the recent paper~\cite{LLP08}. This algorithm ensures that:

\begin{longlist}[(b)]
\item[(a)] If $\un m^n (\s_s ) = \un m^n (\h_s )$,
then $\un m^n (\s_t ) = \un m^n (\h_t )$ for all
$t\geq s$.
\item[(b)] The Hamming distance between $\s_t$ and $\h_t$ is
nonincreasing in time.
\end{longlist}

In a~way, this is reminiscent of the stochastic stability results of
\cite{MS88}. It is straightforward to adjust the construction of\vadjust{\goodbreak}
\cite{LLP08} to the general context we consider here. But both (a) and
(b) above would be lost, and it is not clear that such a~coupling would
work globally.

Instead, our strategy is to use (\ref{intro.10}) and to keep trying to
couple the $\s$-chain with a~typical $\h$-chain each time when $\s_t$
enters $\cS^n (\un m (\h))$. In the sequel, we call this the
\textit{basic coupling attempt}. Clearly, in view of a~possible biased
sampling, basic coupling attempts should be designed with care, which
explains the relatively complicated construction in Section
\ref{section.2.2}. It is based on~\cite{LLP08}, but we need to enlarge
the probability space in order to achieve sufficient independence
between decision making and properties of the eventually chosen
$\eta$-path. In particular, the fact that $\s $-chain and $\h$-chain
meet will not automatically imply coupling.

A second and related problem that tends to arise in the situation that
we are interested in is the breakdown of strict renewal properties.
This is a~well-known issue in the theory of continuous space Markov
processes where methods such as Nummelin splitting~\cite{nummel} were
devised to prove ergodic theorem for the Harris recurrent chains. Here
we would like to use renewal arguments, for example, to prove
asymptotic exponentiallity of the law of~$\tau_B$. We will show that
again coupling arguments can be used to solve such problems.

As an example, we will prove the following theorem.
\begin{theorem}\label{exponential.1}
In the random field Curie--Weiss model, for $A$ and $B$ chosen to
satisfy the hypothesis of Theorem~\ref{pointwise.1},
%
%
\begin{equation}\label{exponential.3}
{\mathbb P}_{\s} (\tau_B/\E_\s\tau_B >t) \rightarrow
e^{-t}\qquad\mbox{as } N\uparrow\infty
\end{equation}
for all $\s\in A$ and for all $t\in\R_+$.
\end{theorem}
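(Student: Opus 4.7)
The plan is to derive the exponential law through an approximate renewal scheme in which both obstructions to genuine renewal---dependence of $\tau_B$ on the microscopic starting configuration and correlations between successive excursions---are neutralized by the basic coupling attempts introduced just before Theorem~\ref{exponential.1}.

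First I would reduce to a fixed reference configuration $\eta\in A$. By Theorem~\ref{pointwise.1}, the normalizations $\E_\sigma\tau_B$ and $\E_\eta\tau_B$ agree up to a factor $1+O(\mathrm{e}^{-CN/2})$, so it suffices to transfer the distributional statement from $\eta$ to an arbitrary $\sigma\in A$. This transfer is effected by launching a basic coupling attempt between the $\sigma$-chain and an independent $\eta$-chain every time the $\sigma$-chain returns to the mesoscopic slice $\mathcal{S}^n[\underline{m}^n(\eta)]$. Hypothesis \eqref{intro.10} guarantees that this slice is visited a super-polynomial number of times before $\tau_B$, and each individual attempt succeeds with probability bounded below uniformly in $N$ for fixed $n$. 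Consequently the coupling succeeds with probability $1-\mathrm{e}^{-cN}$ well before the chain can reach $B$; on the coupling event the two chains share their future trajectory, so the laws of $\tau_B/\E\tau_B$ under $\P_\sigma$ and $\P_\eta$ agree asymptotically.

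The main step is then to prove the exponential limit under $\P_\eta$. Fix a mesoscopic level $n$ and set $C_\eta:=\mathcal{S}^n[\underline{m}^n(\eta)]$. Decomposing the trajectory into successive excursions between $C_\eta$ and its exterior, standard Glauber mixing estimates combined with (A1)--(A2) and the adapted coupling of Section~3 bound the length of any single excursion by $o(\E_\eta\tau_B)$ with overwhelming probability; by \eqref{intro.2} and \eqref{intro.10} the probability $q_N$ that an excursion reaches $B$ before returning to $C_\eta$ is of order $1/\E_\eta\tau_B$. If the excursions were i.i.d., the number of them before $B$ is hit would be $\mathrm{Geom}(q_N)$ and the classical convergence of a geometric-with-mean-$1/q$ distribution to $\mathrm{Exp}(1)$ as $q\to 0$ would immediately yield \eqref{exponential.3}. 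To close the approximate-renewal gap I would again couple: at each entrance to $C_\eta$, run one basic coupling attempt with an independent chain started at $\eta$, and on success substitute its excursion for the true one, so that up to an error of order $\varepsilon(n)$ per cycle the excursions are genuine i.i.d.\ copies of an $\eta$-excursion. Summing the per-cycle errors over the $\sim 1/q_N$ cycles needed to reach $B$ gives a total error of order $\varepsilon(n)/q_N$, which vanishes provided $n=n(N)$ is chosen so that $\varepsilon(n)q_N^{-1}\to 0$ while the prefactor $a_n$ in \eqref{intro.10} stays controlled.

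The principal obstacle is exactly this balance of mesoscopic scales: $n$ must be large enough that the per-cycle coupling error $\varepsilon(n)$ is much smaller than the small escape probability $q_N$, yet small enough that \eqref{intro.10} remains useful with a benign prefactor and that each basic coupling attempt succeeds with uniform positive probability. One must also verify that, before coupling succeeds, the paired chains have only negligible probability of hitting $B$, so that rare coupling failures do not dominate the statistics of $\tau_B$. The model-specific coupling of Section~3, which adapts the Levin--Luczak--Peres construction of \cite{LLP08} to the presence of random fields, is what makes this balance achievable for the random field Curie-Weiss model; this is also why the statement of Theorem~\ref{exponential.1} is restricted to that model rather than to the general setting of Subsection~\ref{subsection.1.2}.
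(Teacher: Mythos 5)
Your high-level strategy---transfer to a fixed reference $\eta$ by coupling, then argue that excursions back to a mesoscopic slice become approximately i.i.d.\ and that a geometric number of them yields the exponential law---is a natural plan, but it departs from the paper's route and contains gaps that are not easy to repair.

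The paper does not try to make the excursion process approximately i.i.d. Instead it works with the Laplace transform $R_\s(\l)=\E_\s(\mathrm{e}^{-\l\tau_B/T})$ and introduces the Perron--Frobenius eigen-measure $\rho_\l$ of the sub-Markov kernel $\s\mapsto\E_\s(\mathrm{e}^{-\l\tau_A/T}\1_{\tau_A<\tau_B}\1_{\s(\tau_A)=\cdot})$ (Eq.~\eqv(renewal.2.bis)). Starting from $\rho_\l$, the renewal equation \eqv(renewal.5) is \emph{exact}---there is no per-cycle error to accumulate at all. The remaining work (Lemma~\thv(steplemma.1), seven steps, plus the Uphill and Downhill Lemmas) is to estimate the numerator and denominator of the exact renewal formula and to compare $\rho_\l$ to $\mu$ via coupling. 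Pointwise stability (Proposition~\thv(prop:pointlaplace)) then transfers the result from $\rho_\l$ to an arbitrary $\s\in A$. Your reduction step and the paper's Proposition~\thv(prop:pointlaplace) play the same role, so that part of your plan is sound.

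Where your plan breaks is the error budget. You write that each basic coupling attempt ``succeeds with probability bounded below uniformly in $N$ for fixed $n$''; this is false. The success event is $\AA\cap\BB$, and $\P(\AA)=(1-\nu)^M=(1-\nu)^{c_2N}$, which is exponentially small in $N$. The paper compensates by iterating attempts: the cumulative success probability $\sum_k\P(\AA\cap\BB)\,\P((\AA\cap\BB)^c\cap\DD)^k$ tends to one, and the denominator $1-(1-\nu)^{c_2N}$ cancels against the numerator because the per-attempt success probability appears once in each and the analysis is multiplicative. Your scheme, by contrast, needs the per-cycle error $\e(n)$ (or the complementary coupling-failure probability) to be summable over $\sim 1/q_N$ cycles, and $q_N^{-1}\sim\E_\eta\tau_B\sim\mathrm{e}^{\b N[F(z^*)-F(m^*)]}$ is exponentially large in $N$. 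Since $\e(n)$ depends only on the fixed mesoscopic scale $n$ and does not decay with $N$, the product $\e(n)/q_N$ does not go to zero no matter how you tune $n$ (unless you let $n$ grow with $N$, which destroys the uniform constants in \eqv(intro.10), in \eqv(eq.ration), and in the capacity estimates). There is also no bound in the paper, nor an easy ``standard Glauber mixing'' bound, that controls a single excursion length by $o(\E_\eta\tau_B)$ uniformly in the starting point on the slice; the paper instead spends Steps~6 and~7 (via the $h$-transform, Lemma~\thv(local-time.1) and the Downhill Lemma) to control precisely this. So the approximate-renewal route you sketch, in which per-cycle errors must be summed over an exponential number of cycles, does not close; you need either the exact-renewal device via $\rho_\l$ or some other way to avoid multiplying a fixed small error by an exponentially large factor.
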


Theorem~\ref{exponential.1} is proven in Section~\ref{S1}. The basic
idea is to use our iterative coupling procedure for deriving a~renewal-type equation for the Laplace transform of~$\tau_B$.

\section{The random field Curie--Weiss model}\label{Sect.2}

The results of this paper are motivated by the study of the Glauber
dynamics of the random field Curie--Weiss model (RFCW). We will show
that the assumptions of the two theorems above can be verified in that
model. In this section, we briefly recall results for this model
obtained recently in~\cite{BBI08} and prove an elementary local
recurrence estimate.

\subsection{The model and equilibrium properties}\label{sec21}

In the RFCW model, the state space is ${\mathcal S}_N\equiv\{-1,1\}
^N$, the Gibbs measure is given by
%
%
\begin{equation}\label{rfcw.1}
\mu_N(\s)=Z_N^{-1} \exp(-\beta{H}_N(\s)),
\end{equation}
and the \textit{random Hamiltonian}, $H_N$, is defined as
%
%
\begin{equation}\label{rfcw.2}
H_N(\s) \equiv-\frac N2 \biggl(\frac1N\sum_{i\in\Lambda}
\s_i\biggr)^2 -\sum_{i\in\Lambda} h_i\s_i,\vadjust{\goodbreak}
\end{equation}
where $\Lambda\equiv\{1,\ldots, N\}$ and $h_i$, $i\in\Lambda$, are
i.i.d. random variables on some probability space $(\Omega, \cF,
{\mathbb P}_h )$.

The total magnetization
%
%
\begin{equation}\label{static.3}
m_N(\s)\equiv\frac1N\sum_{i\in\Lambda}\s_i
\end{equation}
is an effective order parameter of the model, and the sets of
configurations where the magnetization takes particular values play
the r\^{o}le of \textit{metastable states}. More specifically, we
introduce the law of $m_N$ through
%
%
\begin{equation}\label{static.4}
\cQ_{\beta,N}\equiv
\mu_{\beta,N}\circ m_N^{-1}
\end{equation}
on the set of possible values $\G_N\equiv\{-1,-1+2/N,\ldots,1\}$.
$\cQ_{\beta,N}$ satisfies a~large deviation property, in particular
%
%
\begin{equation}\label{static.11.1}
Z_{\beta,N}\QQ_{\beta,N}(m)= \sqrt{{\frac{2 I''_{N}(m)}{N\pi}}}
\exp(-N\beta{F}_{\beta,N}(m)) \bigl(1+
o(1)\bigr)
\end{equation}
with $I_N$ being the Legendre transform of
%
%
\begin{equation}
t \mapsto\frac1{N}\sum_{i\in\Lambda} \log\cosh( t +\beta{h}_i
),
\end{equation}
and with an explicit form for the rate function (``free energy''),
$F_{\beta,N}$. The metastable states correspond to multiple local
minima of $F_{\beta,N}$, whenever they exist.

A crucial feature of the model is that we can introduce a~family of
\textit{mesoscopic} variables in such a~way that the dynamics on these
mesoscopic variables is well approximated by a~Markov process. Let us
briefly describe these mesoscopic variables.

\subsection{Coarse graining}
\label{SS21}
Let $I$ denote the support of the distribution of the random fields.
Let $I_\ell$, with $\ell\in\{1,\ldots,n\}$, be a~partition of $I$
such that, for some $C<\infty$ and
for all $\ell$, $|I_\ell|\leq C/n\equiv{\varepsilon}$.

Each realization of the random field $\{h_i\}_{i\in\N}$ induces
a random partition of the set $\Lambda\equiv\{1,\ldots,N\}$ into subsets
%
%
\begin{equation} \label{9.2}
\Lambda_k\equiv\{i\in\Lambda\dvtx h_i\in I_k\}.
\end{equation}
We may
introduce $n$ order parameters
%
%
\begin{equation} \label{9.3}
m_k(\s)\equiv\frac
1N\sum_{i\in\Lambda_k}\s_i.
\end{equation}
We denote by $ \vm$ the $n$-dimensional vector $(m_1,\ldots,m_n)$.
$\vm$ takes values in the set
%
%
\begin{equation} \label{9.4}
\G_N^n \equiv\bigtimes_{k=1}^n
\biggl\{-\rho_{N,k},-\rho_{N,k}+{\frac{2}{N}},\ldots,
\rho_{N,k}-{\frac{2}{N}},\rho_{N,k}\biggr\},
\end{equation}
where
%
%
\begin{equation}\label{9.5}
\rho_k\equiv\rho_{N,k}\equiv\frac{|\Lambda_k|}N.
\end{equation}
Note that the random variables $\rho_k$ concentrate exponentially (in
$N$) around their mean value $\E_h\rho_{N,k}={\mathbb P}_h[h_i\in
I_k]\equiv p_k$. The Hamiltonian can be written as
%
%
\begin{equation}\label{9.6}
H_N(\s)= -N E(\vm(\s)) +\sum_{\ell=1}^n \sum_{i\in
\Lambda_\ell}\s_i\wt h_i,
\end{equation}
where $E\dvtx\R^n\rightarrow\R$ is the function
%
%
\begin{equation} \label{9.7}
E(\un x)\equiv\frac12\Biggl(\sum_{k=1}^n x_k\Biggr)^2+\sum_{k=1}^n
\bar h_k x_k
\end{equation}
with
%
%
\begin{equation}\label{9.7.1}
\bar h_\ell\equiv
\frac1{|\Lambda_\ell|}\sum_{i\in\Lambda_\ell} h_i\quad \mbox{and}\quad
\wt h_i\equiv h_i-\bar h_\ell.
\end{equation}
The equilibrium distribution of the variables
$\vm(\s)$ is given by
%
%
\begin{eqnarray}\label{9.8}
\cQ_{\beta,N}(\un x)
&\equiv&
\mu_{\beta,N}\bigl(\vm(\s)=\un x\bigr)
\nonumber\\[-8pt]\\[-8pt]
&=& \frac1{Z_N} e^{\beta{N}E(\un x)} \E_\s
\mathbh{1}_{\{\vm(\s)=\un x\}}e^{\sum_{\ell=1}^n \sum_{i\in
\Lambda_\ell}\s_i(h_i-\bar h_\ell)}.\nonumber
\end{eqnarray}
For a~mesoscopic subset, $\bA\subseteq\G_N^n$,
we define its microscopic counterpart, $A$, as
%
%
\begin{equation}\label{put-numbers.10}
A = \cS^n[\bA] = \{\sigma\in\cS_N\dvtx\bmm(\sigma)\in\bA\}.
\end{equation}
Note that, as
in the one-dimensional case, we can express the right-hand side of
(\ref{9.8}) as
%
%
\begin{equation}\label{fine.1.1}
Z_{\beta,N}\QQ_{\beta,N}(\un x)=\prod_{\ell=1}^n
{\sqrt{\frac{(I''_{N,\ell}(x_\ell/\rho_\ell)/\rho_\ell
)}{{N\pi}/ 2}}}
\exp(-N\beta{F}_{\beta,N}(\un x)) \bigl(1+\so\bigr)\hspace*{-28pt}
\end{equation}
with an explicit expression for the function
$F_{\beta,N}$,
%
%
\begin{equation}\label{fine.16}
F_{\beta,N}(\un x)\equiv- \frac12
\Biggl(\sum_{\ell=1}^n x_\ell\Biggr)^2 -\sum_{\ell=1}^n x_\ell
\bar h_\ell+\frac1\beta\sum_{\ell=1}^n \rho_\ell
I_{N,\ell}(x_\ell/\rho_\ell).
\end{equation}
The key point of the construction above is that it places the RFCW
model in the context described in Section~\ref{subsection.1.2}. Namely,
defining the mesoscopic rates, $r_N(\un m,\un m')$, in (\ref{intro.5})
for the functions $\un m$ defined in (\ref{9.3}), one can easily verify
that the estimates (\ref {intro.6}) hold, as was exploited in
\cite{BBI08}. In the next subsection, we will show that the recurrence
hypothesis (\ref{intro.10}) also holds in this model.

In~\cite{BBI08}, we proved the following.
\begin{theorem}\label{theorem1}
Assume that $\beta$ and the distribution of the magnetic field are such
that there exist more than one local minimum of $F_{\beta,N}$. Let
$m^*$ be a~local minimum of $F_{\beta,N}$, $M\equiv M(m^*)$ be the set
of minima of $F_{\beta,N}$ such that $F_{\beta,N}(m)<F_{\beta,N}(m^*)$,
and $z^*$ be the minimax between $m$ and $M$, that is, the lower of the
highest maxima separating $m$ from $M$ to the left, respectively, right.
Then, ${\mathbb P}_h$-almost surely,
%
%
\begin{eqnarray}\label{theorem1.2}
\E_{\nu_{S[m^*],S[M]}} \tau_{S[M]}&=&
C(\beta,m^*,M) N\exp\bigl(\beta{N}[F_{\beta,N}(z^*)-F_{\beta
,N}(m^*)]\bigr)\nonumber\hspace*{-35pt}\\[-8pt]\\[-8pt]
&&{}\times\bigl(1+o(1)\bigr),\nonumber\hspace*{-35pt}
\end{eqnarray}
where $C(\beta,m^*,M)$ is a~constant that is computed explicitly in
\cite{BBI08}.
\end{theorem}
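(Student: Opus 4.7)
The plan is to apply the potential-theoretic identity \eqv(intro.2) with $A=\cS[m^*]$ and $B=\cS[M]$, so that the measure $\nu_A$ coincides with the $\nu_{\cS[m^*],\cS[M]}$ appearing in \eqv(theorem1.2). This reduces the proof to a $\P_h$-almost sure sharp evaluation of two quantities: the weighted sum $\sum_\s h_{\cS[m^*],\cS[M]}(\s)\m_N(\s)$ in the numerator and the capacity $\capa(\cS[m^*],\cS[M])$ in the denominator. Both objects inherit Arrhenius behaviour from the mesoscopic representation \eqv(fine.1.1), with exponential rates $-\b N F_{\b,N}(m^*)$ and $-\b N F_{\b,N}(z^*)$ respectively, dressed by Laplace-type Gaussian prefactors computed from the Hessian of $F_{\b,N}$ at the corresponding critical point.

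For the numerator I would exploit that $h_{\cS[m^*],\cS[M]}\equiv 1$ on $\cS[m^*]$, and that by reversibility together with the local recurrence estimate \eqv(intro.10), it differs from $1$ by an amount of order $e^{-CN}$ on the entire basin $\{F_{\b,N}<F_{\b,N}(z^*)-\d\}$ around $m^*$, while on the complementary region its contribution is exponentially negligible against $e^{-\b N F_{\b,N}(m^*)}$. Hence the sum equals $\m_N(\text{basin of } m^*)(1+\so)$, which by \eqv(fine.1.1) and an $n$-dimensional Laplace expansion at $m^*$ yields $Z_{\b,N}^{-1}\,G_1(m^*)\exp(-\b N F_{\b,N}(m^*))(1+\so)$, with $G_1(m^*)$ an explicit Gaussian integral over the positive-definite Hessian at $m^*$.

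For the capacity I would invoke assumption (A.1), specifically \eqv(intro.6), to replace the microscopic Dirichlet form by its mesoscopic counterpart built from $r_N$ at the price of a multiplicative $(1\pm\e(n))$. The upper bound then follows from the Dirichlet variational principle applied to a test function of the form $f(\bm^n(\s))$ that is constant on the level sets of $F_{\b,N}$ and crosses from $1$ to $0$ at the saddle $z^*$. The matching lower bound is provided by the Berman--Konsowa flow principle, with a unit flow concentrated in a thin tube around the one-dimensional unstable direction at $z^*$. A Laplace expansion of the resulting mesoscopic Dirichlet sum at $z^*$ gives $\capa(\cS[m^*],\cS[M])=N^{-1}Z_{\b,N}^{-1}\,G_2(z^*)\exp(-\b N F_{\b,N}(z^*))(1+\so)$, where the factor $N^{-1}$ records the uniform site-selection probability implicit in $p_N$ and $G_2(z^*)$ involves the negative eigenvalue and the positive eigenvalues of $F_{\b,N}''(z^*)$. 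Taking the ratio, the $Z_{\b,N}^{-1}$ cancels and one reads off \eqv(theorem1.2) with $C(\b,m^*,M)=G_1(m^*)/G_2(z^*)$.

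The delicate step is the sharp capacity \emph{lower} bound: Berman--Konsowa demands a microscopic unit flow on $\{-1,1\}^N$, and to recover the Gaussian prefactor at $z^*$ this flow must be lifted from $\G^n_N$ through the fibres $\cS^n[\bm]$ without losing a polynomial factor. Assumption (A.2) is what makes the lift clean: the microscopic flip probabilities depend on $\s$ only through $\bm^n(\s)$ and the value of the flipped spin, so that the microscopic conductances on a fibre reproduce exactly the mesoscopic rates $r_N$, and the intra-block field fluctuations $\tilde h_i$ from \eqv(9.7.1) enter only through the exponential weights already visible in \eqv(fine.1.1). The remaining work is algebraic bookkeeping: identifying the unstable direction at $z^*$ and expanding the Gaussian ratio $G_1(m^*)/G_2(z^*)$ to exhibit the announced constant $C(\b,m^*,M)$.
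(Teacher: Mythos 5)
The paper you are reviewing does not prove Theorem~\thv(theorem1) at all: the sentence immediately preceding the statement reads ``In \cite{BBI08} we proved the following,'' and no argument for \eqv(theorem1.2) appears anywhere in the present text. So there is no proof here against which your proposal can be compared; it is a quotation from an earlier paper.

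Taken on its own terms, your outline is the right skeleton and matches the potential-theoretic program carried out in \cite{BBI08}: apply the identity \eqv(intro.2) with $A=\cS[m^*]$ and $B=\cS[M]$; evaluate the numerator $\sum_\s h_{A,B}(\s)\mu_{\b,N}(\s)$ by a Laplace expansion of \eqv(fine.1.1) at the local minimum $m^*$; and bound $\capa(\cS[m^*],\cS[M])$ from above by the Dirichlet principle with a mesoscopic test function and from below by a flow (Berman--Konsowa-type) argument, both localized at the saddle $z^*$, so that the $Z_{\b,N}$ cancels and the ratio of Gaussian prefactors furnishes $C(\b,m^*,M)$ while the explicit factor $N$ records the uniform site selection. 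All of that is consistent with \cite{BBI08}.

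However, as a proof it defers essentially all of the hard work, and two of the steps you treat as quick are precisely the crux of \cite{BBI08}. First, the claim that $h_{\cS[m^*],\cS[M]}$ is $1+\mathrm{O}(e^{-CN})$ uniformly on the whole basin does not follow from \eqv(intro.10): that estimate concerns starting points inside a small $\d$-neighbourhood $A_\d$ of $m^*$, not the entire sub-level set of $F_{\b,N}$, and extending the control requires a nontrivial bootstrap from rough a priori capacity bounds (this is the same point-wise control of equilibrium potentials that the introduction of the present paper singles out as the obstruction motivating the whole coupling approach). Second, the sharp capacity lower bound needs an explicit microscopic unit flow whose energy matches the Laplace asymptotics at $z^*$ to leading order, including control of the intra-fibre weights coming from the $\tilde h_i$ in \eqv(9.7.1); assumption (A.2) guarantees that flip rates are constant on fibres, but it does not by itself make the fluctuation bookkeeping disappear. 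So the proposal correctly names the route but does not constitute a proof.
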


Here the initial measure, $\nu_{S[m^*],S[M]}$,
is the so-called \textit{last exit biased distribution} on the set
$S[m^*]\equiv\{\s\in\cS_N\dvtx m_N(\s)=m^*\}$, given by
the formula
%
%
\begin{equation}\label{theorem1.1}
\nu_{A,B}(\s)=\frac{\mu_{\beta,N}(\s){\mathbb P}_\s[\tau
_{B}<\tau_A]}
{\sum_{\s\in A}\mu_{\beta,N}(\s){\mathbb P}_\s[\tau_{B}<\tau_A]}.
\end{equation}

Although the theorem is stated in~\cite{BBI08} for the starting measure
in a~set defined with respect to the one-dimensional order parameter, the
estimates given there immediately imply that the same formulas hold
replacing $m^*$ with a~local minimum, $\vm^*$, in
the $n$-dimensional order parameter space.

Theorem~\ref{theorem1} implies that the estimate (\ref{theorem1.2})
holds for $\E_\s\tau_{S[M]}$ for any $\s$ in a~neighborhood of
$\cS[\vm^*]$, for $n$ large enough.

\subsection{Local recurrence}\label{sec23}

Before starting the proof of (\ref{intro.11}), let us verify that the
hypothesis (\ref{intro.10}) holds for the RFCW model. Specifically, let
us define the metastable set $\bA_\delta\subset\G_n$ as the ball, with
respect to the Hamming distance, of fixed radius $\delta{N}$,
$\delta>0$, centered on a~local minimum $\bmm^*$ of $F_{\beta,N}$. Let
$A_\delta\subset\cS_N$ be the corresponding microscopic metastable set
and denote by $\tau_{\bmm}$ the first hitting time of the set
$\cS^n[\bmm]$. With this notation, we have the following lemma.
\begin{lemma}\label{lemma:hypothesis}
There exist $\delta>0$ and $c_1>0$ such that, for all $n$ large enough,
$\s, \s'\in A_\delta$,
%
%
\begin{equation}\label{hypothesis.1}
{\mathbb P}_{\s}\bigl[\tau_{B}<\tau_{\bmm(\s')}\bigr]\leq
e^{ -c_1N}.
\end{equation}
\end{lemma}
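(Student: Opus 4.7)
The plan is to reduce the claim to a comparison of two capacities via the strong Markov property. Fix $\s,\s'\in A_\d$ and set $D\equiv \cS^n[\bm(\s')]$. Applying the strong Markov property at $\t_\s^+\wedge\t_D\wedge\t_B$ (with $\t_\s^+$ the first return time to $\s$) and using the identity $\capa(\s,C)=\mu_N(\s)\,\P_\s[\t_C<\t_\s^+]$ together with the inclusion $\{\t_D<\t_\s^+\}\subset \{\t_D<\t_\s^+\wedge\t_B\}\cup\{\t_B<\t_\s^+\}$, one obtains
\[
\P_\s[\t_B<\t_D]
=\frac{\P_\s[\t_B<\t_\s^+\wedge\t_D]}{\P_\s[\t_B<\t_\s^+\wedge\t_D]+\P_\s[\t_D<\t_\s^+\wedge\t_B]}
\leq \frac{\capa(\s,B)}{\capa(\s,D)-\capa(\s,B)}.
\]
It thus suffices to show $\capa(\s,B)\leq \tfrac12\,{\rm e}^{-c_1 N}\capa(\s,D)$.

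For the upper bound on $\capa(\s,B)$, I would use the Dirichlet variational principle with a test function that equals $1$ on the near-well side of the saddle $z^*$ and $0$ on the far side. Since $\bm^*$ is a local minimum of $F_{\b,N}$ separated from the well containing $\bB$ by a strictly positive barrier $\Delta F\equiv F_{\b,N}(z^*)-F_{\b,N}(\bm^*)$, and using the explicit representation \eqv(fine.1.1), this gives
\[
\capa(\s,B)\leq \mu_N(\s)\,N^K\,{\rm e}^{-\b N\Delta F}
\]
for some fixed $K$. This is precisely the capacity estimate already used in \cite{BBI08} in the proof of Theorem \thv(theorem1), so in practice one would just quote it.

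For the lower bound on $\capa(\s,D)$, apply Thomson's principle to a single microscopic path $\s=\s_0,\s_1,\dots,\s_L$ with $\s_L\in D$ and $\s_k,\s_{k+1}$ differing by a single spin flip. Since $\s,\s'$ both lie in a ball of radius $\d N$ around a common reference configuration, they differ in at most $2\d N$ sites, so $L\leq 2\d N$ by flipping these sites one after another. A single spin flip changes $H_N$ by at most a fixed constant $c_0$ depending only on the $\ell^\infty$-bound on the fields, hence $\mu_N(\s_k)\geq \mu_N(\s)\,{\rm e}^{-c_0\b L}\geq \mu_N(\s)\,{\rm e}^{-2c_0\b\d N}$, while \eqref{eq:PUpdate} gives $p_N(\s_k,\s_{k+1})\geq (1-\a)/N$. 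The unit flow along this path has energy at most $L/\min_k(\mu_N(\s_k)p_N(\s_k,\s_{k+1}))$, so
\[
\capa(\s,D)\geq \mu_N(\s)\,\frac{(1-\a)\,{\rm e}^{-2c_0\b\d N}}{2\d N^2}.
\]

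Combining, $\capa(\s,B)/\capa(\s,D)\leq O(N^{K+2})\,{\rm e}^{-\b N(\Delta F-2c_0\d)}$, which is $\leq\tfrac12\,{\rm e}^{-c_1 N}$ as soon as $\d$ is chosen with $2c_0\d<\Delta F$. The main obstacle is precisely this joint quantitative tuning: the capacity upper bound is governed by the fixed macroscopic barrier $\Delta F$, while the explicit-flow lower bound degrades linearly in $\d$. One must therefore fix the radius $\d$ in the definition of $A_\d$ strictly smaller than $\Delta F/(2c_0)$ and verify that for such $\d$ the spin-flipping path can indeed be ordered so that every $\bm(\s_k)$ stays within a bounded enlargement of the basin of $\bm^*$; since $|\bm(\s_k)-\bm(\s_0)|\leq 2k/N\leq 4\d$, this holds for $\d$ small enough that $\bA_{4\d}$ remains strictly inside the near well.
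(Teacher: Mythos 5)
Your reduction to capacity ratios via the first-return decomposition at the single configuration $\s$ is algebraically correct — in fact one even has the cleaner bound $\P_\s[\t_B<\t_D]\leq\capa(\s,B)/\capa(\s,D)$ — but the upper bound you then claim for $\capa(\s,B)$ is precisely the pointwise estimate the paper emphasizes is \emph{not} directly accessible, and the justification you give for it does not work. Since $\capa(\s,B)=\mu_N(\s)\,\P_\s[\t_B<\t_\s^+]$, your claimed bound $\capa(\s,B)\leq\mu_N(\s)N^K{\rm e}^{-\b N\Delta F}$ is equivalent to $\P_\s[\t_B<\t_\s^+]\leq N^K{\rm e}^{-\b N\Delta F}$, i.e.\ the chain returns to \emph{the single microscopic configuration} $\s$ before reaching $B$ with overwhelming probability. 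There is no reason for this to hold: by Kac's formula $\E_\s\t_\s^+=1/\mu_N(\s)$, which carries the full entropy factor $|\cS^n[\bm(\s)]|$ and is therefore exponentially larger than $\QQ_N(\bm^*)^{-1}$. When $1/\mu_N(\s)$ exceeds the barrier scale ${\rm e}^{\b N\Delta F}$, the chain typically hits $B$ before ever revisiting $\s$ and $\P_\s[\t_B<\t_\s^+]$ is of order one. The macroscopic test function argument you invoke only gives $\capa(\s,B)\leq\capa(A_\d,B)$, which is at $\QQ_N(\bm^*)$-scale and hence \emph{larger} than the trivial bound $\mu_N(\s)$ for an individual configuration. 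Since your Thomson lower bound for $\capa(\s,D)$ is also of order $\mu_N(\s){\rm e}^{-c\d N}/\mathrm{poly}$, the resulting ratio is $O({\rm e}^{c\d N})$ and says nothing. You have rediscovered exactly the microscopic-vs.-macroscopic $0/0$ obstruction that motivates the paper.

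The paper's proof never passes through single-point escape probabilities. Note that $\t_{\bm(\s')}$ in the statement is the hitting time of the entire mesoscopic slab $\cS^n[\bm(\s')]$, a set containing exponentially many configurations. The proof first invokes the nontrivial estimate $\P_\s[\t_B<\t_{\cS^n[\bm^*]}]\leq{\rm e}^{-c_0N}$ from \cite{BBI08} (Prop.~6.12), which is the genuinely hard pointwise-in-$\s$ input and is not re-derived. A short microscopic path then gives the easy bound $\P_\s[\t_{\cS^n[\bm(\s')]}<\t_{\cS^n[\bm^*]}]\geq{\rm e}^{-c\d N}$, and a renewal iteration over successive visits to $\cS^n[\bm^*]$ combines the two into \eqref{hypothesis.1} with $c_1=c_0-c\d$, which is positive for $\d$ small. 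Your overall quantitative intuition — a fixed barrier $\Delta F$ competing against a linearly-in-$\d$ cost — is correct, but it surfaces here as the condition $c_0>c\d$ in the renewal step, not as a capacity-ratio argument for a single microscopic state; the latter cannot be closed without an input equivalent to Prop.~6.12.
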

\begin{pf}
We first notice that if $\s'\in\cS^n[\bmm^*]$, then the assertion
of the
lemma
holds for all $n$ sufficiently large with a~constant $c_0$
independent of $n$, as has been proven in~\cite{BBI08} (see Proposition 6.12).

Moreover, for all $\s,\s'\in A_\delta$,
%
%
\begin{equation}\label{hypothesis.2}
{\mathbb P}_{\s}\bigl[\tau_{\bmm(\s')}<\tau_{\un m^*}\bigr]\geq
e^{ -c\delta{N}}
\end{equation}
for some positive constant $c$.
To see this, notice that, due to the property of~$\bA_\delta$,
one can find a~mesoscopic path
from $\bmm(\s)$ to $\bmm(\s')$ with length at most~$\delta{N}$.
Implementing the argument that is used in the proof of Lemma~6.11
of~\cite{BBI08}, one gets~(\ref{hypothesis.2}).

To prove (\ref{hypothesis.1}), we use a~renewal argument. Let us
consider a~configuration $\s\in\cS^n[\bmm^*]$ and a~generic $\s'\in
A_\delta$, and set $\bmm\equiv\bmm(\s')$. Then
%
%
\begin{eqnarray}\label{renewal.1}\qquad
{\mathbb P}_{\s}(\tau_{B}<\tau_{\bmm})&\leq&
{\mathbb P}_{\s}(\tau_{B}<\tau_{\bmm}\wedge\tau_{\bmm^*}) +
{\mathbb P}_{\s}(\tau_{\bmm^*}<\tau_{B}<\tau_{\bmm})
\nonumber\\
&\leq&
{\mathbb P}_{\s}(\tau_{B}<\tau_{\bmm^*}) + \max_{\eta\in\cS
^n[\bmm^*]}
{\mathbb P}_\eta(\tau_{B}<\tau_{\bmm}) {\mathbb P}_\s(\tau_{\bmm
^*}<\tau_{\un m})
\\
&\leq& e^{-c_0 N} +
\max_{\eta\in\cS^n[\bmm^*]}{\mathbb P}_\eta(\tau_{B}<\tau_{\bmm
}) (1-e^{-c\delta{N} }),\nonumber
\end{eqnarray}
where in the second line we used the Markov property, and in the last
line we used the inequality (\ref{hypothesis.1}) and
(\ref{hypothesis.2}). Taking the maximum over $\s\in\cS^n[\bmm^*]$ on
both sides of (\ref {renewal.1}) and rearranging the summation, we get
the inequa\-lity~(\ref{hypothesis.1}) for $\s\in\cS^n[\bmm^*]$, with a~constant $c_1=c_0-c\delta$ which is strictly positive for small enough
$\delta$.

Now let us consider the general case when $\s, \s'\in A_\delta$ and
set again
$\bmm\equiv\bmm(\s')$. As before, we have
%
%
\begin{eqnarray}\label{renewal.2}\qquad
{\mathbb P}_{\s}(\tau_{B}<\tau_{\bmm})&\leq&
{\mathbb P}_{\s}(\tau_{B}<\tau_{\bmm}\wedge\tau_{\bmm^*}) +
{\mathbb P}_{\s}(\tau_{\bmm^*}<\tau_{B}<\tau_{\bmm})
\nonumber\\
&\leq&
{\mathbb P}_{\s}(\tau_{B}<\tau_{\bmm^*}) +
\max_{\eta\in\cS^n[\bmm^*]}
{\mathbb P}_\eta(\tau_{B}<\tau_{\bmm}) {\mathbb P}_\s(\tau_{\bmm
^*}<\tau_{B})
\\
&\leq& e^{-c_0N}+ e^{-c_1N}
=e^{-c_1N}\bigl(1+o(1)\bigr),\nonumber
\end{eqnarray}
where in the third line we used the fact that the inequality (\ref
{hypothesis.1})
was already established
for $\h\in\cS^n[\bmm^*]$.
This concludes the proof of the lemma.
\end{pf}

Lemma~\ref{lemma:hypothesis} shows that, for all $n$ large enough,
the RFCW model parameterized by the variables $\bmm\in\G_N^n$
satisfies the hypothesis (\ref{intro.10}) with $A=A_\delta$ as
defined above.

\section{Construction of the coupling}\label{S1}
\subsection{The coupling by Levin, Luczak and
Peres} \label{subsection.2.1}

Recall that we consider a~partition, $\{\Lambda_1,\ldots, \Lambda_n\}$,
of $\Lambda \equiv\{1,\ldots, N\}$ and let $\un m = (m_1(\s),\ldots,
m_n(\s))$ be the vector of partial magnetizations as defined in
(\ref{9.3}).

We begin by explaining a~coupling that was used by Levin, Luczak and
Peres~\cite{LLP08} in the usual Curie--Weiss model. In that case, the
transition rates have the following properties: whenever $x, y$ and
$\s,\h$ are such that:
\begin{longlist}[(ii)]
\item[(i)] $\un m (\s)=\un m(\h)$, and
\item[(ii)] $\s_x = \h_y $,\vadjust{\goodbreak}
\end{longlist}
then
%
%
\begin{equation}\label{2.3}
p_N\bigl(\s,\s^{(x)}\bigr) =p_N\bigl(\h,\h^{(y)}\bigr).
\end{equation}
%
We continue to employ
the notation
$p_x^{\pm}(\s)$,
%
%
\begin{equation}\label{peeplus}
p_{x}^{-\s_x } (\s) \equiv N
p_N\bigl(\s, \s^{(x)}\bigr)
\quad\mbox{and}\quad p_{x}^{+} (\s) + p_{x}^{-} (\s) = 1,
\end{equation}
where, as usual, $\s^{(x )}$ is the configuration obtained from $\s$ by
setting $\s^x_x=-\s_x$ and leaving all other components of $\s$
unchanged.

The coupling of Levin, Luczak and Peres is constructed as follows. Let
$\s$ and $\h$ be two initial conditions such that $\un m(\s)=\un
m(\h)$. Let $I_t$, $t=0,1,2,\ldots,$ be a~family of independent random
variables that are uniformly distributed on $\Lambda$. Assume that at
time $t$, $\un m(\s(t))=\un m(\h(t))$ and do the following:
\begin{enumerate}[(O2)]
\item[(O1)] Draw the random variable $I_t$;
\item[(O2)] Set $\eta_{I_t}(t+1)= \pm1 $ with probabilities
$p_{I_t}^{\pm}(\eta(t))$, respectively,
and set $\eta_x(t+1)=\eta_x(t)$ for all $x\neq I_t$;
\item[(A)] Then do the following:
\begin{enumerate}[(ii)]
\item[(i)] If $\s_{I_t}(t)=\h_{I_t}(t)$,
then set:
\begin{enumerate}[$*$]
\item[$*$]
$\s_{I_t}(t+1)=\h_{I_t}(t+1)$;
\item[$*$] $\s_x(t+1)=\s_x(t)$, for all $x\neq I_t$.
\end{enumerate}
\item[(ii)] If $\s_{I_t}(t)\neq\h_{I_t}(t)$, then
let $\Lambda_k$ be the element of the partition such that $I_t\in
\Lambda_k$ and choose $y$ uniformly at random on the set
$\{z\in\Lambda_k\dvtx\s_z(t)\neq\h_z(t)\neq
\h_{I_t}(t)\}$.
Note that this set is not empty, since $\un
m(\s(t))=\un m(\h(t))$ and $\s(t)$ and $\h(t)$ differ in
one site of $\Lambda_k$. Then set:
\begin{enumerate}[$*$]
\item[$*$] $\s_{y}(t+1)=\h_{I_t }(t+1 )$;
\item[$*$] $\s_x(t+1)=\s_x(t)$, for all $x\neq y$.
\end{enumerate}
\end{enumerate}
\end{enumerate}
Note that this construction has the virtue that $\un m(\s(t+1))
=\un m(\h(t+1))$, so that the assumption inherent in the
construction is always verified, if it is verified at time zero.

Moreover, if $\s(t)=\h(t)$ for some $t$, then $\s(t+s)=\h(t+s)$, for
all $s\geq0$. Finally, one easily checks that the marginal
distributions of $\s(t)$ and $\h(t)$ coincide and are given by the law
of the original dynamics. This latter fact depends crucially on the
fact that the flip rates do not depend on which site in a~given subset
$\Lambda_i$ the spin is flipped, provided they are flipped in the same
direction.

\subsection{Coupling attempt in the general case}\label{section.2.2}

In the general case, we consider here, including the RFCW, (\ref{2.3})
does not hold unless $x = y$. All we assume is (A.2) and
(\ref{intro.6}). The problem is now that the probabilities to {update}
the $\s$-chain in a~chosen point $y$ are typically not the same as
those of the $\h$-chain in the original point $I_t$. However, by
(\ref{intro.6}), these probabilities are still close to each other, in
the sense that there exists $\nu= \nu(n)$ with $\nu\downarrow0$ as
$n\uparrow\infty$, for example, $\nu(n ) = 3\epsilon(n)$, such that for
any $k$, for any $x ,y\in\Lambda_k$ and for any $\s$ and $\h$ with $\un
m(\s) = \un m (\h)$ and $\s_x =\h_y$,
%
%
\begin{equation}
\label{eq.ration}
\frac{p^{\pm}_x (\h)}{p^\pm_y (\s)} \leq1 +\nu.
\end{equation}

Thus, in order to maintain the correct marginal distribution for the
processes, we have to change the updating rules in such a~way that the
$\s$-chain will sometimes not maintain the same magnetization as the
$\h$-chain, which implies that the coupling cannot be continued.

The basic strategy to overcome this difficulty is to use iterated
coupling attempts. We shall decompose the $\s$-path on $[0, \tau_B^\s)$
into cycles and during each cycle we shall attempt to couple it with an
independent copy of the $\h$-chain. In the case of success, both chains
will run together until $\tau_B$. Such procedure necessarily involves a~sampling of $\eta$-paths. In order to control its bias, it will be
important to separate the path properties of $\eta$-chains with which
we try to couple from the probability of whether a~subsequent coupling
attempt is successful or not. This will be achieved by constructing a~coupling on an extended probability space.

\subsubsection*{Basic coupling attempt}
There are two parameters $c_2 >0$ and $\kappa<\infty$ whose
values will be quantified in the sequel.

Let $\eta$ and $\s$ satisfy $\un m (\eta) = \un m (\s)$. We shall try
to couple a~$\s$-path with an $\eta$-path during the first
$N^\kappa$-steps of their life. Let $M = c_2 N$ and let $V_i, i = 1,
\ldots, M$, be a~family of i.i.d. Bernoulli random variables with
%
%
\begin{equation}\label{2.5}
{\mathbb P}[V_i=1]=1-{\mathbb P}[V_i=0] = 1-
\nu(n).
\end{equation}
We now describe how the coupling construction is adjusted using the
random variables $V_i, i=1, \ldots,M$. Let $\un m (\eta(0) ) = \un m
(\s(0) )$.

As before, let $I_t$, $t=0,1,2,\ldots,N^\kappa$, be a~family of
independent random variables that are uniformly distributed on
$\Lambda$. Let $\cM_0=0$ and $\chi_0 =0$, $\eta(0)=\eta$, $\s(0)=\s$.
At time $t\geq1$, do the following:
\begin{enumerate}[(O2)]
\item[(O1)] Draw the random variable $I_t$;
\item[(O2)] Set $\eta_{I_t}(t+1)= \pm1$ with probability
$p_{I_t}^{\pm}(\eta_{I_t}(t))$ and set $\eta_x(t+1)=\eta_x(t)$ for
all $x\neq I_t$;
\item[(A)] If at time $1\leq t\leq N^\kappa$,
$\chi_t = 0$ and $\cM_t <M$, then
do the following:
\begin{enumerate}[(ii)]
\item[(i)]
If $\s_{I_t}(t)=\h_{I_t}(t)$, then set:
\begin{enumerate}[$*$]
\item[$*$] $\s_{I_t}(t+1)=\h_{I_t}(t+1)$;
\item[$*$] $\s_x(t+1)=\s_x(t)$, for all $x\neq I_t$;
\item[$*$] $\cM_{t+1}=\cM_t$.
\end{enumerate}
\item[(ii)] If $\s_{I_t}(t)\neq\h_{I_t}(t)$,
let $\Lambda_k$ be the element of the partition such that $I_t\in
\Lambda_k$ and, as before, choose $y$ uniformly at random on the set
$\{z\in\Lambda_k\dvtx\s_z(t)\neq\h_z(t)\neq
\h_{I_t}(t)\}$. Then set:
\begin{itemize}[$*$]
\item[$*$] $\s_{y}(t+1)=\h_{I_t}(t+1)$ with probability
%
%
\begin{equation}\label{tt.31}
\cases{
1, &\quad if $V_{\cM_t}=1$,\cr
\displaystyle \frac{p_{I_t}^{\pm}(\h(t) )\wedge p_{y}^{\pm}(\s(t) )
- (1-\nu)p_{I_t}^\pm(\h(t) )}
{\nu p_{I_t}^{\pm}(\h(t))},
&\quad if $V_{\cM_t}=0$,}
\end{equation}
and
$\s_{y}(t+1)=-\h_{I_t}(t+1)$ with probability
%
%
\begin{equation}\label{tt.44}
\cases{
0, &\quad if $V_{\cM_t}=1$,\cr
\displaystyle \frac{ p_{I_t}^{\pm}(\h(t)) - p_{I_t}^\pm(\h(t))\wedge p_y^{\pm
}(\s(t))}
{\nu p_{I_t}^{\pm}(\h(t))}, &\quad if $V_{\cM_t}=0$;}
\end{equation}
\item[$*$] $\s_x(t+1)=\s_x(t)$, for all $x\neq y$;
\item[$*$] If $V_{\cM_t}=0$, then set $\chi_s=1$ for $s= t+1, \ldots,
N^\kappa$, otherwise set $\chi_{t+1}=\chi_{t}$;
\item[$*$]
Set $\MM_{t+1}=\cM_t+1$;
\end{itemize}
\end{enumerate}
\item[(B)]If at time $t$,
either $\chi_t =1$ or $\cM_t =M$,
then update $\s$ independently of $\eta$, that is:
\begin{enumerate}[(ii)]
\item[(i)] draw $I'_t$ independently with the same law as $I_t$, and
\item[(ii)] set
$\s_{I'_t}(t+1) = \pm1$ with probability $p_{I'_t}^{\pm}(\s(t))$,
and $\s_x(t+1)=
\s_x(t)$, for all $x\neq I'_t$.
\end{enumerate}
\end{enumerate}
The process $\cM_t$ is a~counter that increases by one each time a~new
coin $V_i$ is used by the coupling. The value $\chi_t =1$ of the
variable $\chi_t$ indicates that a~zero coin was used by time $t$.

The following lemma collects the basic properties of the process constructed
above.
\begin{lemma}\label{properties.1} Let $\wt{\mathbb P}$ denote the
joint distribution of
the processes $\s,\eta,V$ defined above. Then the above is a~good
coupling in
the sense that the marginal distributions of both $\eta(t), t
\leq N^\kappa$, and
$\s(t), t
\leq N^\kappa$, under the law $\wt{\mathbb P}$ are ${\mathbb P}_{\s
(0)}$ and ${\mathbb P}_{\eta(0)}$, respectively.
\end{lemma}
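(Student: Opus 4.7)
The $\eta$-marginal is immediate from the construction: at every time $t\leq N^\kappa$ the pair of instructions (O1)--(O2) draws $I_t$ uniformly on $\L$, independently of the past and of the coin $V_{\cM_t}$, then updates $\eta_{I_t}$ according to the genuine Glauber rule $p^\pm_{I_t}(\eta(t))$ and leaves every other coordinate of $\eta$ untouched. Since the branch (A) versus (B) and the subsequent update of $\s$ never feed back into the $\eta$-dynamics, the law of $(\eta(t))_{t\leq N^\kappa}$ coincides with the original chain started at $\eta(0)$.

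For the $\s$-marginal I would proceed by induction on $t$, carrying along the auxiliary invariant that on $\{\chi_t=0\}$ one has $\un m(\s(t))=\un m(\eta(t))$, and hence $|D_k^+|=|D_k^-|$ in every block $\L_k$, where $D_k^\pm\equiv\{z\in\L_k:\s_z\neq\eta_z,\ \eta_z=\pm 1\}$. Case (i) preserves the invariant because both chains flip the same site in the same direction; case (ii) with $V_{\cM_t}=1$ preserves it because the selection of $y$ forces $\s_y(t)=\eta_{I_t}(t)$ while the update enforces $\s_y(t+1)=\eta_{I_t}(t+1)$; case (ii) with $V_{\cM_t}=0$ may destroy it, but then $\chi_{t+1}=1$ and $\s$ subsequently evolves autonomously through (B), which trivially has the correct marginal. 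Fix a site $y_0\in\L_k$. If $\s_{y_0}(t)=\eta_{y_0}(t)$, the only way $\s_{y_0}$ changes is through (i) with $I_t=y_0$, contributing $\tfrac1N p^{-\s_{y_0}(t)}_{y_0}(\eta(t))=\tfrac1N p^{-\s_{y_0}(t)}_{y_0}(\s(t))$ by (A.2). If $\s_{y_0}(t)\neq\eta_{y_0}(t)$, write $s=\s_{y_0}(t)=\eta_{I_t}(t)$; then $y_0$ is updated only via (ii), when $I_t\in D_k^{-\eta_{y_0}(t)}$ and the uniform pick in $D_k^{\eta_{y_0}(t)}$ returns $y_0$. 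By $|D_k^+|=|D_k^-|$ this event has probability exactly $\tfrac1N$, and summing the branches in \eqref{tt.31}--\eqref{tt.44} against $V_{\cM_t}$ and $\eta_{I_t}(t+1)$ yields the flip probability
\[
p^{-s}_{I_t}\wedge p^{-s}_{y_0}+p^{s}_{I_t}-p^{s}_{I_t}\wedge p^{s}_{y_0},
\]
which, via $p^{+}+p^{-}=1$, collapses to $p^{-s}_{y_0}(\s(t))$ independently of $I_t$.

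The real computational content reduces to two elementary checks: (a) that the numbers displayed in \eqref{tt.31}--\eqref{tt.44} do lie in $[0,1]$, and (b) the algebraic identity above. For (a), assumption \eqref{eq.ration} together with the trivial bound $\tfrac{1}{1+\nu}\geq 1-\nu$ gives $p^\pm_{y_0}(\s)\geq(1-\nu)p^\pm_{I_t}(\eta)$ whenever the invariant holds and $\s_{y_0}=\eta_{I_t}$, which is exactly what is needed for the numerators of \eqref{tt.31}--\eqref{tt.44} to be nonnegative; their domination by the denominator $\nu p^\pm_{I_t}(\eta)$ is manifest. For (b) one simply splits on the sign of $p^{s}_{I_t}-p^{s}_{y_0}$. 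Once these are in place, the induction step closes and $(\s(t))_{t\leq N^\kappa}$ is a realisation of the original Glauber dynamics started from $\s(0)$, proving the lemma.
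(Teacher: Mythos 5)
Your proof is correct and follows the same route as the paper's: the $\eta$-marginal is immediate, and the $\s$-marginal reduces to the algebraic identity obtained by averaging \eqref{tt.31}--\eqref{tt.44} over $V_{\cM_t}$ and $\eta_{I_t}(t+1)$, which the paper records in \eqref{coup.3}. You are in fact slightly more complete than the paper on two auxiliary points that the paper leaves implicit: the $1/N$ prefactor, which you justify by tracking the invariant $|D_k^+|=|D_k^-|$ and counting which draws of $I_t$ can target a fixed site $y_0$, and the verification via \eqref{eq.ration} that the expressions in \eqref{tt.31}--\eqref{tt.44} are genuine probabilities.
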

\begin{pf} The assertion is obvious for the process $\eta(t)$.
It is also clear for the $\s(t )$ process if updates are done according
to case B. Therefore, we only need to check that it holds for process
$\s(t)$ at such times $t\leq N^\kappa$ when $\chi_t$ is still $0$ and
$\cM_t$ is still less than $M=c_2 N$.
In other words, we have to compute
%
%
\begin{equation}\label{coup.1}
\wt{\mathbb P}[\s(t+1)=\s_x^+ (t) |\s(t); \chi_t = 0 ; \cM
_t <c_2 N],
\end{equation}
where $\s_x^+ \df(\s_1,\ldots,\s_{x-1},+1,\ldots,\s_N )$.
First, it is clear that, given that $I_{t}=x$ and $\s_{I_{t}}(t)
=\eta_{I_{t}}(t)$,
we get the desired result, that is,
%
%
\begin{eqnarray}\label{coup.2}
&&
\wt{\mathbb P}\bigl[\s(t+1)= \s_x^+ (t)
| I_{t}=x;
\s_x(t)=\eta_x(t) ; \s(t) ;\chi_t = 0 ; \cM_t <c_2 N
\bigr)\nonumber\\[-8pt]\\[-8pt]
&&\qquad= p_x^+(\eta(t))=p_x^+(\s(t) ).\nonumber
\end{eqnarray}
In the case $I_{t}=y\neq x$, we get a~contribution
to (\ref{coup.1}) only if:
\begin{longlist}[(iii)]
\item[(i)] $x,y$ are in the same set $\Lambda_i$,
\item[(ii)] $\s_y(t)\neq\h_y(t)$,
\item[(iii)] $\s_x(t ) \neq\h_x(t)$, and
\item[(iv)] $\s_x(t)= \h_y(t)$.
\end{longlist}
If these conditions are satisfied, the probability to flip $\s_x$ to
$+1$ is
%
%
\begin{eqnarray}\label{coup.3}\quad
&&(1-\nu) p_y^+(\eta(t))+\nu p_x^+(\eta(t))\frac{ p_{y}^+(\h
(t))\wedge p_x^{+}(\s(t)) -(1-\nu)p^+_y(\eta(t))}
{{\nu}p_{y}^{+}(\h(t))}\nonumber\\
&&\quad{}+\nu p^-_y\frac{ p_{y}^{-}(\h(t)) - p_{y}^-(\h(t))\wedge
p_x^{-}(\s(t))}
{{\nu}p_{y}^{-}(\h(t))}\nonumber\\[-8pt]\\[-8pt]
&&\qquad=
p_{y}^+(\h(t))\wedge p_x^{+}(\s(t))
+ p_{y}^{-}(\h(t)) - p_{y}^-(\h(t))\wedge p_x^{-}(\s(t))\nonumber\\
&&\qquad=
p_x^+(\s(t)).\nonumber
\end{eqnarray}
The last line is easily verified by distinguishing cases.
It follows that the probability in (\ref{coup.1}) is equal to
$N^{-1} p_x^+(\s(t))$, as desired. This proves the lemma.
\end{pf}

The construction above tries to merge the processes $\eta(t)$ and
$\s(t)$ only as long as $\chi_t=0$ and $\MM_t<M$. Note that if for some
$t < N^\kappa$ both these conditions still hold and, in addition,
$\eta(t)=\s(t)$, then the two dynamics automatically stay together
until $N^\kappa$ and, indeed, stay coupled forever. Naively, one would
want to classify such situation as ``successful coupling.'' However,
this would involve an implicit sampling of $\eta$-trajectories which
may lead to distortion of their statistical properties. For example, it
is not clear whether the correct value of $\E_\h\tau _B$ would survive
such a~procedure.

In order to circumvent this obstacle, we use a~more restrictive
definition of what a~``successful coupling'' should be. Namely, we say
that our basic coupling attempt is successful if the following two
\textit{independent} events $\cA$ and~$\cB$ simultaneously happen on
the enlarged probability space:
\begin{longlist}[(2)]
\item[(1)] The event
%
%
\begin{equation}\label{eventa.1}
\cA\equiv\Biggl\{ \bigforall V_i=1\Biggr\}
\end{equation}
is the event that all $M$ random variables
$V_i$ should be equal to $1$.
\item[(2)]
The event $\BB$ depends only on the random variables
$\eta(t), t\leq N^\kk$. To define it, we introduce
two stopping times, $S$ and $\NN$. Let
%
%
\begin{equation}\label{event.2}
S_x= \inf\{ t\dvtx\h_x (t+1) = -\h_x (0 ) \}
\end{equation}
and set $S\equiv\max_{1\leq x\leq N} S_x$.
Clearly, $S_x$ is the first time the spin
at site $x$ has been flipped and $S$ is the first time\vadjust{\goodbreak}
all coordinates of $\eta$ have been flipped. $\NN$ is
defined as
%
%
\begin{equation}\label{event.4}
\NN\equiv\sum_{t=0}^{S}\sum_{x=1}^N
\mathbh{1}_{\{I_t = x\}}\mathbh{1}_{\{t\leq S_x\}},
\end{equation}
which is the total number of flipping attempts until time $S$.
Finally,
%
%
\begin{equation}\label{event.5}
\BB\equiv\{\tau^{\h}_B \geq N^\kappa\}\cap\{
S < N^\kappa\}\cap\{\NN\leq M
\}.
\end{equation}
\end{longlist}

The important observation is the following.
\begin{lemma}\label{ab.1}
On ${\mathcal A}\cap\BB$, the coupling is successful in the sense that
%
%
\begin{equation}\label{success.1}
{\mathcal A}\cap\BB\subset\{\h(N^\kk)=\s(N^\kk)\}.
\end{equation}
\end{lemma}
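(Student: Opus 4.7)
The plan is to show that on $\AA\cap\BB$ the disagreement set $D_t=\{x:\s_x(t)\neq \h_x(t)\}$ empties by time $S+1\leq N^\kappa$ and remains empty up to time $N^\kappa$. On $\AA$ all Bernoulli variables $V_i$ are equal to $1$, so by the construction of Subsection~\ref{section.2.2} the auxiliary indicator $\chi_t$ never leaves $0$, and regime~(A) applies at every step with $\cM_t<M$.

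I would first describe the one-step evolution of $D_t$ under regime~(A). In case~(A)(i), $I_t\notin D_t$ by definition and the update $\s_{I_t}(t+1)=\h_{I_t}(t+1)$ keeps $I_t$ out of $D_{t+1}$. In case~(A)(ii) the construction forces the choice of $y\in\L_k$ to satisfy $\s_y(t)=\h_{I_t}(t)$, and on $\AA$ the prescription $\s_y(t+1)=\h_{I_t}(t+1)$ is applied with probability one. Consequently, if $\h_{I_t}$ actually flips then $I_t$ and $y$ both exit $D_{t+1}$ simultaneously, while otherwise $D_{t+1}=D_t$. Hence as long as regime~(A) holds, $D_t$ is non-increasing and, once out, a site stays out of $D$ forever.

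The crucial combinatorial step is the implication $I_t\in D_t \Rightarrow t\leq S_{I_t}$: if $\h_{I_t}$ had already flipped at some $t'<t$, the case analysis above applied at $t'$ would remove $I_t$ from $D_{t'+1}$, and by monotonicity $I_t\notin D_t$, a contradiction. This yields the bookkeeping bound
\[
\cM_t\ \leq\ \sum_{s<t}\1_{\{I_s\in D_s\}}\ \leq\ \sum_{s<t}\1_{\{s\leq S_{I_s}\}}.
\]
The last flip time $t=S$ satisfies $S_{I_S}=S$ and therefore contributes $1$ to $\NN$, yet cannot be counted in any $\cM_t$ with $t\leq S$, so $\cM_t\leq \NN-1\leq M-1<M$ for every $t\leq S$. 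A short induction on $t$ handles the slight circularity between the monotonicity of $D_t$ and the bound on $\cM_t$: the base case $t=0$ is trivial, and the induction step uses $\cM_t<M$ to confirm that regime~(A) applies at step $t$, which in turn validates the case analysis above.

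Combining the pieces, each $x\in D_0$ is removed from $D_t$ no later than its own flipping time $S_x$, so $D_{S+1}=\emptyset$ and $\s(S+1)=\h(S+1)$, using $S<N^\kappa$ from $\BB$. For $t\in(S,N^\kappa]$ the set $D_t$ remains empty, only case~(A)(i) updates occur, the counter $\cM_t$ does not grow, and the two chains stay equal up to $N^\kappa$, proving $\AA\cap\BB\subset\{\s(N^\kappa)=\h(N^\kappa)\}$. The main obstacle is precisely the strict inequality $\cM_t<M$ in the bookkeeping above, which rests on identifying one attempt that contributes to $\NN$ but not to $\cM$, namely the attempt at time $S$; the hypothesis $\tau_B^\h\geq N^\kappa$ from $\BB$ is not needed for this lemma but will matter when it is inserted into the iterated coupling scheme underlying Theorem~\ref{exponential.1}.
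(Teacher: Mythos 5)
Your approach is the same as the paper's, but you carry it through with considerably more care. The paper's proof is very terse: it simply asserts that on $\AA\cap\BB$ all spins flip once by $N^\kappa$, each first flip at a discrepancy site aligns $\s$ and $\h$ because the coins are all $+1$, and concludes. You make explicit what that argument is implicitly relying on: the monotonicity of the disagreement set $D_t$, the implication $I_t\in D_t\Rightarrow t\leq S_{I_t}$, the resulting bound on $\cM_t$, and the induction that removes the circularity. That is a genuine improvement in rigor over the published proof.

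There is, however, one step where your bookkeeping does not quite close. You establish $\cM_t\leq\NN-1<M$ for all $t\leq S$, and then assert that for $t\in(S,N^\kappa]$ only case~(A)(i) updates occur. For that you need regime~(A) to still apply at step $S+1$, i.e.\ $\cM_{S+1}<M$. But your argument only gives $\cM_{S+1}=\cM_S+\1_{\{I_S\in D_S\}}\leq(\NN-1)+1=\NN\leq M$, which is not strict. If $\cM_{S+1}=M$, rule~(B) fires at time $S+1$ and the two chains (although equal at time $S+1$) are thereafter updated with independent site choices and coins, so nothing forces $\s(N^\kappa)=\h(N^\kappa)$. To close this, observe that in fact $D_S=\emptyset$: from $x\in D_t\Rightarrow t\leq S_x$ and $S=\max_x S_x$ one gets $D_S\subseteq\{x: S_x=S\}=\{I_S\}$; and if $D_S=\{I_S\}$, case~(A)(ii) at time $S$ would have to pick some $y\in\L_k$ with $\s_y(S)\neq\h_y(S)\neq\h_{I_S}(S)$, i.e.\ $y\in D_S\setminus\{I_S\}=\emptyset$, which is impossible (the construction guarantees this set is non-empty whenever $I_S\in D_S$, since $\un m(\s(S))=\un m(\h(S))$). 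Hence $D_S=\emptyset$, step $S$ is a case~(A)(i) step, $\cM_{S+1}=\cM_S<M$, and your induction extends to all $t\leq N^\kappa$ without gaps. (Equivalently: $|D_t\cap\L_k|$ is always even because $\un m(\s(t))=\un m(\h(t))$, so $|D_S|\leq 1$ forces $D_S=\emptyset$.) With this one extra observation your proof is complete; the paper's own proof elides this point entirely.

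Your final remark that $\lbr\tau_B^\h\geq N^\kappa\rbr$ plays no role in this particular lemma is correct — it is there for the downstream cycle decomposition, not for the coupling itself.
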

\begin{pf}
On the event $\BB\cap{\mathcal A}$, by time $N^k$ $\eta(t)$ has not
reached $B$, all
spins have been flipped once, and each flip that involved a~site where
$\eta(t)\neq\s(t)$
was done when the coin $V_i$ took the value $+1$. Therefore, on
each first flip the corresponding $\eta$ and $\s$ spins became aligned,
hence $\eta(N^\kk)=\s(N^\kk)$.
\end{pf}
\begin{remark*}
Note that the inclusion (\ref{success.1}) is in general strict.
The rationale for the introduction of the events ${\mathcal A}$ and
$\BB$ is that
the unlikely event ${\mathcal A}$ does not affect the $\eta$-chain at
all and that the
(likely) event $\BB$ does not
distort the hitting times of the $\eta$-chain in the sense that
$\E_{\h}(\tau_B \mathbh{1}_{\BB})\geq\E_\h\tau_B(1
- e^{-c N} )$.
This will be part of the content of Lemma~\ref{event.8} which we formulate
and prove below.
\end{remark*}

\subsection{\texorpdfstring{Construction of a~cycle and cycle decomposition of $\s$-paths}
{Construction of a~cycle and cycle decomposition of sigma-paths}}
We have seen that $\BB\cap{\mathcal A}$ indicates that our coupling
is successful and
$\eta(t)$ and $\s(t)$ arrive together in $B$. However, the
probability of
${\mathcal A}\cap\BB$
is very small, essentially due to the fact that the probability of
${\mathcal A}$ is
small, namely ${\mathbb P}({\mathcal A})=(1-\nu)^M$. What will be
essential is that the
probability of $\BB$ is otherwise close to one, and therefore the
$\eta$-paths (which are independent of the~$V_i$)
will be affected very little by the occurrence of ${\mathcal A}\cap\BB$.

We then have to decide what to do on $({\mathcal A}\cap\BB)^c$ at
time $N^\kk$.
Define the stopping time
%
%
\begin{equation}\label{delta.1}
\Delta= \min\{ t> N^\kappa\dvtx\s(t)\in\mathcal S^n[\un m (\h
)]\}.
\end{equation}
If
%
%
\begin{equation}\label{delta.2}
\DD= \{\Delta<\tau_B^\s\}
\end{equation}
happens, then we initiate a~new basic coupling attempt at time $\Delta
$ for
a new, independent
copy of the $\h$-chain and a~chain starting from $\s(\Delta)$.
Otherwise, on the event $\DD^c
\cap({\mathcal A} \cap\BB)^c $, the process stops and coupling has
not occurred.

The cycle decomposition of $\s[0, \tau_B^\s)$ is based on a~collection
$\{\h^\ell[0, \tau_B^{\ell,\h})\}$ of independent copies of
$\eta$-chains and on a~collection
$\{\un V^\ell= ( V^\ell_0 , \ldots, V^\ell_{N^\kappa})\}
$ of
i.i.d. stacks of coins.
The events $\{{\mathcal A}^\ell, \BB^\ell\}$ are well defined
and independent. The events\vadjust{\goodbreak}
$\{\DD^\ell\}$ are defined iteratively as follows: the event
$\DD^0$ is simply\vspace*{1pt} the above
event $\DD$ defined with respect the coupling attempt based on $\{
\eta^0 , \un V^0\}$.
If $\DD^0$ occurs, we denote by $\theta_0 = \Delta_0$ the random time
at which the first cycle ends.
Assume now that $\bigcap_0^{k-1}\DD^\ell\cap\bigcap(\cA^\ell\cap\cB
^\ell)^c $ happened\vspace*{1pt}
and that the $(k-1)$st cycle was finished at
a random time $\theta_{k-1}$ and at some random point $\s(\theta
_{k-1}) \in
\mathcal S^n[\un m (\h)]$. Let us\vspace*{1pt} initiate a~new basic coupling
attempt using a~new independent copy\vspace*{1pt}
$\{\h^k , \un V^k\}$ for a~chain starting at $\h$ and a~chain
starting from $\s(\theta_{k-1} )$.
The event $\cD^k$, and accordingly the cycle length $\Delta_k$, are
then defined appropriately. If $\DD^k$ happens and
$\bigcap(\cA^\ell\cap\cB^\ell)^c $
then $\theta_k\equiv\theta_{k-1} +\Delta_k$,
$\s(\theta_k ) \in\mathcal S^n[\un m (\h)]$ is well defined as
well, and the iterative procedure goes on.

In   light of the above definitions, the
enlarged probability space $\wt\Omega$ has the following disjoint
decomposition:
%
%
\begin{eqnarray}\label{decomp.1}
\mathbh{1} &=&
\sum_{k=0}^\infty\mathbh{1}_{{\mathcal A}^k}\mathbh{1}_{\BB
^k}\prod_{\ell=0}^{k-1}(
1-\mathbh{1}_{{\mathcal A}^\ell}\mathbh{1}_{\BB^\ell})\mathbh
{1}_{\DD^\ell}
\nonumber\\[-8pt]\\[-8pt]
&&{}+
\sum_{k=0}^\infty(1- \mathbh{1}_{{\mathcal A}^k}\mathbh{1}_{\BB
^k})(1- \mathbh{1}_{\DD^k})
\prod_{\ell=0}^{k-1}(
1-\mathbh{1}_{{\mathcal A}^\ell}\mathbh{1}_{\BB^\ell})\mathbh
{1}_{\DD^\ell}.\nonumber
\end{eqnarray}
As a~consequence, we arrive at the following decomposition of the
hitting time $\tau_B^\s$ in terms
of the (independent ) hitting times $\{\tau_B^{k, \h}\}$:
%
%
\begin{eqnarray}
\label{eq:tauBDecomp}
\tau_B^\s&=& \sum_{k=0}^\infty
\Biggl\{\prod_{\ell=0}^{k-1}\mathbh{1}_{\DD^{\ell}}(1- \mathbh
{1}_{{\mathcal A}^{\ell}}\mathbh{1}_{\BB^{\ell}})\Biggr\}
(\theta_{k-1} +
\tau_B^{k,\h} )\mathbh{1}_{{\mathcal A}^{k}}\mathbh{1}_{\BB
^{k}}\nonumber\\[-8pt]\\[-8pt]
&&{}+
\tau_B^\s\sum_{k=0}^\infty
\Biggl\{\prod_{\ell=0}^{k-1}\mathbh{1}_{\DD^{\ell}}(1- \mathbh
{1}_{{\mathcal A}^{\ell}}
\mathbh{1}_{\BB^{\ell}})\Biggr\}
(1- \mathbh{1}_{\DD^{k}})(1- \mathbh{1}_{{\mathcal
A}^{k}}\mathbh{1}_{\BB^{k}}).
\nonumber
\end{eqnarray}
(In both formulas above, we use the convention that products with a~negative number of terms are
equal to $1$ and set $\theta_{-1}\equiv0$.) Note that the first terms
in (\ref{decomp.1}) and
(\ref{eq:tauBDecomp}) correspond to the
cases when the iterative coupling eventually succeeds, whereas the
second term
corresponds to the case when it eventually fails.

\subsection{\texorpdfstring{Upper bounds on probabilities and proof of Theorem \protect\ref{pointwise.1}}
{Upper bounds on probabilities and proof of Theorem 1.1}}

\begin{lemma}\label{event.8}
The following estimates hold uniformly in $\s, \h\in A$:
\begin{longlist}[(ii)]
\item[(i)] There is a~constant $c>0$, independent of $n$, such that,
for $N$ large enough,
%
%
\begin{equation}\label{event.10}
{\mathbb P}(\BB^c)\leq e^{-c N}
\end{equation}
and
%
%
\begin{equation}
\label{eq:BExp}
\E_{\h}(\tau_B \mathbh{1}_{\BB})\geq\E_\h\tau_B(1
- e^{-c N} ).
\end{equation}
\item[(ii)] If $N$ is large enough,
%
%
\begin{equation}\label{event.11}
{\mathbb P}_\s(\cD)\geq1-
e^{-c N} .
\end{equation}
\end{longlist}
\end{lemma}
\begin{pf}
Item (ii) follows from Lemma~\ref{lemma:hypothesis} with, for example,
$c = c_1/2$.
To prove item (i), we write $\BB^c = \{\tau_B^\h\leq N^\kappa
\}\cup\{ S\geq N^\kappa
\}\cup\{\NN> M\}$. Thus,
%
%
\begin{equation}\label{proof.20}
\mathbh{1}_{\BB^c}=\mathbh{1}_{\{\tau_B^\h\leq N^\kk\}}+
\mathbh{1}_{\{\tau_B^\h> N^\kk\}}\mathbh{1}_{\{ S\geq N^\kk\}}
+\mathbh{1}_{\{\tau_B^\h> N^\kk\}}\mathbh{1}_{\{ S< N^\kk\}}
\mathbh{1}_{\{ \cN>M\}}.\hspace*{-35pt}
\end{equation}
Inserting this into (\ref{event.10}) and
(\ref{eq:BExp}), there are three terms to bound.
The first term is easy:
%
%
\begin{equation}\label{tra.1}
{\mathbb P}_{\h} (\tau_B \leq N^\kappa)
\leq
N^{\kk}\max_{\s'\dvtx\un m(\s')= \un m}{\mathbb P}_{\s'}(\tau
_B<\tau_{\un m})
\leq N^{\kk} e^{-c_1 N}.
\end{equation}
The first inequality used the fact that in
order to reach $B$, the process has to make one final excursion to $B$
without return to the starting set $\un m$, and that there are at most
$N^\kappa$ attempts to do so. The last inequality uses
(\ref{hypothesis.1}).
The corresponding term for (\ref{eq:BExp}) is
%
%
\begin{equation}\label{proof.1}
\E_{\h} \bigl( \tau_B \mathbh{1}_{\{\tau_B^\h< N^\kappa\}
}\bigr)
\leq N^{2\kk} e^{-c_1 N}.
\end{equation}
The second term is also easy:
first,
%
%
\begin{equation}\label{tra.3}
{\mathbb P}_\h(\{\tau_B> N^\kk\}
\cap
\{S \geq N^\kappa\} )
\leq{\mathbb P}_{\h}( S \geq N^\kappa)
\end{equation}
and
%
%
\begin{eqnarray}\label{proof.2}\qquad
\E_{\h} \bigl(\tau_B\mathbh{1}_{\{\tau_B> N^\kk\}} \mathbh{1}_{\{S
\geq N^\kappa\}} \bigr)
&\leq&
\sum_{\s'} ( N^\kappa+\E_{\s'} \tau_B )
{\mathbb P}_{\h} (\h_{N^\kappa} =\s' ; S\geq N^k)
\nonumber\\[-8pt]\\[-8pt]
&\leq&
\Bigl( N^\kappa+\max_{\s'} \E_{\s'} \tau_B\Bigr)
{\mathbb P}_{\h}( S \geq N^\kappa).\nonumber
\end{eqnarray}
Using the formula (\ref{intro.2}) with $A=\{\s'\}$, and bounding the
corresponding capacity $\capa(\s',B)\geq e^{-c_3N}$ from below
in the crudest way
(e.g., retaining a~single one-dimensional path from $\s'$ to $B$; see
\cite{Bo5}), one gets that
%
%
\begin{equation}
\label{eq:Crude}
N^\kappa+ \max_{\s'} \E_{\s'}\tau_B \leq e^{2 c_3 N} ,
\end{equation}
where $c_3$ \textit{does not} depend on $n$.

Next, we show that if $\kappa>2$ the probability
${\mathbb P}_{\h}( S \geq N^\kappa)$ is super-exponen\-tially small.
Indeed, since at each step
the probability to flip each particular spin is bounded from below by
$(1- \alpha) /N$,
%
%
\begin{equation}\label{proof.5}
{\mathbb P}_\h( S \geq N^\kappa)\leq
N
\biggl(1- \frac{1- \alpha}{N}\biggr)^{N^\kappa}\leq e^{-c_4N^{\kk
-1}} .
\end{equation}
Finally, even the third term is easy:
%
%
\begin{equation}\label{tra.5}
\E_{\h}\bigl(\mathbh{1}_{\{\tau_B> N^\kk\}}\mathbh{1}_{\{ S< N^\kk
\}} \mathbh{1}_{\{\NN> M\}}\bigr)
\leq{\mathbb P}_{\h}(\NN> M)
\end{equation}
and, as in (\ref{proof.2}),
%
%
\begin{eqnarray}\label{proof.6}
\E_{\h}\bigl(\tau_B\mathbh{1}_{\{\tau_B> N^\kk\}}\mathbh{1}_{\{ S<
N^\kk\}} \mathbh{1}_{\{\NN> M\}}\bigr)
&\leq&
\Bigl( N^\kappa+ \max_{\s'}\E_{\s'} \tau_B \Bigr){\mathbb P}_{\h}(
\NN> M)\nonumber\hspace*{-35pt}\\[-8pt]\\[-8pt]
& \leq&
e^{2 c_3 N} {\mathbb P}_{\h}(\NN> M).\nonumber\hspace*{-35pt}
\end{eqnarray}
It remains to bound ${\mathbb P}_{\h}(\NN> M)$.
In order to do this, we split the time interval $[0, S]$ into epochs
%
%
\begin{equation}\label{proof.7}
[0, S] = [0, S_{i_1}]\cup(S_{i_1 } , S_{i_2}]\cup\cdots\cup
(S_{i_{N-1}} , S] ,
\end{equation}
where $\underline{i} = \{ i_1 , \ldots, i_N\}$ is a~permutation of
$\{1, \ldots,N\}$ which is fixed by the order in which spins are
flipped for the first time,
%
%
\begin{equation}
\label{eq:Order}
S_{i_1} < S_{i_2} <\cdots< S_{i_N} =S .
\end{equation}
Fix a~particular permutation $\underline{i}$ and let
$\EE[\underline{i}]$ be the event
that (\ref{eq:Order}) happens. Let us first derive a~lower bound on
${\mathbb P}_{\h}(\EE[i ])$.
It is convenient to decompose
$\EE[\ui] =\bigcap_{k=0}^{N-1}\EE_k [\ui]$, where
%
%
\begin{eqnarray}\label{proof.8}
\EE_0 [\ui] &=&\{\mbox{No spin was flipped on $[0,
S_{i_1}-1)$}\}
\nonumber\\[-8pt]\\[-8pt]
&&{} \cap\{\mbox{Spin $i_1$ was flipped
on $S_{i_1}$-$t$ step}\}\nonumber
\end{eqnarray}
and
%
%
\begin{eqnarray}\label{proof.9}
\EE_k [\ui] &=&\{\mbox{No spin was flipped for the first time during
$[S_{i_k}, S_{i_{k+1}}-1)$}\}
\nonumber\hspace*{-35pt}\\[-8pt]\\[-8pt]
&&{} \cap\{\mbox{Spin $i_{k+1}$ was flipped
on $S_{i_{k+1}}$-$t$ step}\}.\nonumber\hspace*{-35pt}
\end{eqnarray}
Let $\NN_k$ be\vspace*{1pt} the number of times previously unflipped
spins were attempted to flip during the interval $(S_{i_k}, S_{i_{k+1}}]$.
Clearly $\NN= \sum_{k=0}^{N-1}\NN_k$.

In view of (\ref{eq:PUpdate}),
%
%
\begin{equation}
\label{eqE0}
{\mathbb P}_{\h}(\EE_0 [\ui]; \NN_0 = \ell_0 )\leq
\frac{\alpha^{\ell_0}}{N} .
\end{equation}
To give an upper bound on the probability of the events $\{\EE_k [\ui]
;\NN_k =\ell_k\}$, for $k >0$, we distinguish between two types of
trials, which happen during the intervals $ (S_{i_k }, S_{i_{k+1}})$.
First, one might choose yet unflipped spins from $\{ i_{k+1},
\ldots,i_N\}$ but then fail to flip them. On the event $\{\NN_k
=\ell_k\}$ this happens exactly $\ell_k -1$ times. Second, one might
choose already flipped spins from the set $\{ i_1 , \ldots, i_k\}$. The
probability of the latter is $k/N$, whereas, according to
(\ref{eq:PUpdate}), a~uniform upper bound for the probability of the
former option is $\alpha(N-k)/N$. Thus, if $\GG_k$ is the
$\sigma$-field generated by $\h_{[0, S_k]}$, then
%
%
\begin{eqnarray}
\label{eq:Ek}
{\mathbb P}_{\h}
(
\EE_k ; \NN_k =\ell_k
|\GG_k)&\leq&
\frac{\alpha}{N}
\biggl(
\frac{\alpha(N-k)}{N}
\biggr)^{\ell_k -1}
\Biggl(\sum_{j=0}^\infty\biggl(\frac{k}{N}\biggr)^j\Biggr)^{\ell_k }
\nonumber\\[-8pt]\\[-8pt]
& = &\frac{\alpha^{\ell_k}}{N-k}.\nonumber
\end{eqnarray}
Therefore,
%
%
\begin{equation}\label{proof.10}
\E_{\h}\Biggl(\prod_{\ell=0}^k \mathbh{1}_{\EE_\ell} \mathbh
{1}_{\{ \NN_k = \ell_k \}}| \GG_k\Biggr)\leq
\frac{\alpha^{\ell_k }}{N-k}
\prod_{\ell=0}^{k-1}\mathbh{1}_{\EE_\ell[\ui]}
\end{equation}
and, consequently,
%
%
\begin{equation}\label{proof.11}
{\mathbb P}_{\h}(\EE[\ui] ; \NN_0 =\ell_0;\ldots;\NN_{N-1}
=\ell_{N-1} )\leq\frac{1}{N!}
\alpha^{\sum\ell_k }.
\end{equation}
As a~result, we get that
%
%
\begin{equation}
\label{eq:NNBound}
{\mathbb P}_{\h}(\NN> M)\leq\sum_{L> M}\alpha^L
\pmatrix{{N+L}\cr L}
\leq\sum_{L> M} e^{-\ln(1/\alpha)L}
e^{N(\ln(1+L/N)+1)}.\hspace*{-35pt}
\end{equation}
For $M\equiv c_2 N$ and providing that $c_2$ is large enough,
we finally obtain that
%
%
\begin{equation}
\label{eq:VirginFlips1}
{\mathbb P}_{\h}(\NN> M)\leq e^{-c_5 N}
\end{equation}
for a~constant, $c_5$, increasing linearly with $c_2$.
Putting all estimates together concludes the proof of the lemma.
\end{pf}

Notice that if ${\mathcal A}\cap\BB$ happens, then
$\un m (\h_t )\equiv\un m (\s_t )$. In
particular, ${\mathcal A}\cap\BB\subset\{\tau_B^\sigma\geq
N^\kappa\}$ and hence
$\tau_B^\sigma= \tau_B^\h$ on ${\mathcal A}\cap\BB$.

Let us go back to the cycle decomposition (\ref{eq:tauBDecomp}).
Using $\wt\E$ for the expectation on the enlarged probability space,
%
%
\begin{equation}\label{tt.5}
\E\tau_B^\s\geq\sum_{k=0}^\infty
\wt\E
\Biggl\{\prod_{\ell=0}^{k-1}\mathbh{1}_{\DD^{\ell}}(1- \mathbh
{1}_{{\mathcal A}^{\ell}}\mathbh{1}_{\BB^{\ell}})\Biggr\}
\tau_B^{k,\h} \mathbh{1}_{{\mathcal A}^{k}}\mathbh{1}_{\BB^{k}}.
\end{equation}

Let $\FF_{\theta_{k}}$ be the $\sigma$-algebra generated by all the
events and trajectories~${\mathcal A}^{\ell},\break \BB^{\ell}, \DD^{\ell}$,
$\eta^\ell$ and $\sigma(\theta_{\ell- 1}, \theta_\ell]$, $\ell\leq k$.
In view of the independence of the copies $\{\eta^\ell, \un V^\ell \}$,
%
%
\begin{equation}\label{proof.14}\qquad
\wt\E(
\tau_B^{\h} \mathbh{1}_{{\mathcal A}^{k}}\mathbh{1}_{\BB^{k}}
| \FF_{\theta_{k-1}} )
={\mathbb P}({\mathcal A})\E_{\h}(\tau_B\mathbh{1}_\BB)=
(1-\nu)^{M}\E_{\h}(\tau_B\mathbh{1}_\BB).
\end{equation}
On the other hand,
%
%
\begin{eqnarray}\label{anton.1001}
\wt\E\bigl(\mathbh{1}_{\DD^{\ell}}(1- \mathbh{1}_{{\mathcal
A}^{\ell}}\mathbh{1}_{\BB^{\ell}})|
\FF_{\theta_{\ell-1} }\bigr)
&\geq&
\wt\E(1- \mathbh{1}_{{\mathcal A}^{\ell}}\mathbh{1}_{\BB^{\ell
}})-\max_{\s'\in{\mathcal S}^n[\un m
]}\bigl(1 -{\mathbb P}_{\s'} (\DD)\bigr)\nonumber\hspace*{-35pt}\\[-8pt]\\[-8pt]
&\geq& 1 - (1-\nu)^{M} - e^{-cN}.\nonumber\hspace*{-35pt}
\end{eqnarray}
Altogether (recall that $M = c_2 N$),
%
%
\begin{eqnarray}\label{ale.1}\quad
\E_\s\tau_B
&\geq&
\E_{\h} (\tau_B \mathbh{1}_\BB)
(1-\nu)^{c_2N }
\sum_{k=0}^\infty
\bigl(1 - (1-\nu)^{c_2N} - e^{-c N}\bigr)^k
\nonumber\\[-8pt]\\[-8pt]
&\geq&\E_{\h} \tau_B \frac{1-e^{-cN}}{1+(1-\nu)^{-c_2N}
e^{-c N}},\nonumber
\end{eqnarray}
which tends to $\E_{\h} \tau_B$ if $\nu< c /c_2$.
This concludes the proof of Theorem~\ref{pointwise.1}.

\subsection{\texorpdfstring{Extension to the case $\un m(\s)\neq \un m (\eta)$}
{Extension to the case m(sigma) /= m(eta)}}

Very little has to be changed if we replace the condition that we start in
a configuration $\s$ that has the same mesoscopic magnetization
as $\eta$, but for which (\ref{intro.10}) still holds.
In that case, we cannot start the coupling in the first cycle,
so we simply have to
wait until time $\Delta_{0}$ (provided $\cD^{0}$ occurs, i.e., $\s
_t$ does
not hit
$B$ before that time). This means that we replace (\ref
{eq:tauBDecomp}) by
%
%
\begin{eqnarray}\qquad\quad
\label{eq:tauBDecomp-bis}
\tau_B^\s&=& \sum_{k=1}^\infty
\Biggl\{\mathbh{1}_{\cD^{0}}\prod_{\ell=1}^{k-1}\mathbh{1}_{\DD
^{(\ell)}}
(1- \mathbh{1}_{{\mathcal A}^{\ell}}\mathbh{1}_{\BB^{\ell}})
\Biggr\}
(\theta_{k-1} +
\tau_B^{k,\h} )\mathbh{1}_{{\mathcal A}^{k}}\mathbh{1}_{\BB^{k}}
+\tau_{B}^\s\mathbh{1}_{(\cD^{0})^c}\nonumber\\[-8pt]\\[-8pt]
&&{}+\tau_B^\s\sum_{k=1}^\infty
\Biggl\{\mathbh{1}_{\cD^{0}} \prod_{\ell=1}^{k-1}\mathbh{1}_{\DD
^{\ell}}
(1- \mathbh{1}_{{\mathcal A}^{\ell}}\mathbh{1}_{\BB^{\ell}})
\Biggr\}
(1- \mathbh{1}_{\DD^{k}})(1- \mathbh{1}_{{\mathcal
A}^{k}}\mathbh{1}_{\BB^{k}}).
\nonumber
\end{eqnarray}
We then proceed exactly as before to get
%
%
\begin{eqnarray}\label{ale.1-1}\qquad\quad
\E_{\s} \tau_B
&\geq& \E_\eta(\tau_B\mathbh{1}_{\cB})(1-\nu)^{c_2N }
\sum_{k=0}^\infty
(1-e^{-c_2N}) \bigl(1 - (1-\nu)^{c_1N} - e^{-c_2
N}\bigr)^{k}\nonumber\\[-8pt]\\[-8pt]
&\geq&\E_{\h} \tau_B \frac{(1-e^{-cN})(1-e^{-c_2N})}
{1+(1-\nu)^{-c_1N}e^{-c_2N}},\nonumber
\end{eqnarray}
which is virtually equivalent to the previous case.

\subsection{The Laplace transform}

Next, we show that the same coupling can also be used to show that the
Laplace transform of $\tau_B$ depends very little on the
initial conditions within a~set $A$.
Set $T\equiv\E_{\nu_{A}} \tau_B $.
\begin{proposition}\label{prop:pointlaplace}
If $A,B$ satisfy the hypothesis of Theorem~\ref{pointwise.1}, then,
for every configurations $\s, \h\in A$ and $\lambda\geq0$,
%
%
\begin{equation}\label{eq:pointlaplace}
R_\s(\lambda)\equiv\E_\s\bigl(e^{-({\lambda}/T)\tau
_B}\bigr)=
\E_\h\bigl(e^{-({\lambda}/T)\tau_B}\bigr)\bigl(1+ o(1)\bigr).
\end{equation}
\end{proposition}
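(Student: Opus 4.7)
The plan is to reuse the cycle decomposition from the proof of Theorem~\ref{pointwise.1}, applied now to $e^{-\lambda\tau_B/T}$ rather than to $\tau_B$. Write $G_\ell\equiv(1-\1_{\AA^\ell}\1_{\BB^\ell})\1_{\DD^\ell}$ and $p\equiv(1-\nu)^{c_2N}=\P(\AA^\ell)$. On the success event $\prod_{\ell<k}G_\ell\cdot \1_{\AA^k}\1_{\BB^k}$ one has $\tau_B^\s=\theta_{k-1}+\tau_B^{k,\h}$, so $e^{-\lambda\tau_B^\s/T}=e^{-\lambda\theta_{k-1}/T}e^{-\lambda\tau_B^{k,\h}/T}$, while on the total-failure complement I bound $e^{-\lambda\tau_B^\s/T}\leq 1$, whose contribution is at most the total-failure probability, shown to be $O(e^{-cN}/p)$ in the proof of Theorem~\ref{pointwise.1} via Lemma~\ref{event.8}. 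Taking expectations,
\begin{equation*}
R_\s(\lambda)=\sum_{k\geq 0}\wt\E\Bigl[e^{-\lambda(\theta_{k-1}+\tau_B^{k,\h})/T}\prod_{\ell<k}G_\ell\cdot \1_{\AA^k}\1_{\BB^k}\Bigr]+O(e^{-cN}/p).
\end{equation*}

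Since $\h^k$ and $\underline V^k$ are independent of $\FF_{\theta_{k-1}}$ and $\AA^k$ is independent of $\BB^k$, factoring the summand gives
\begin{equation*}
\wt\E\bigl[e^{-\lambda\tau_B^{k,\h}/T}\1_{\AA^k}\1_{\BB^k}\bigm|\FF_{\theta_{k-1}}\bigr]=p\,\E_\h\bigl[e^{-\lambda\tau_B/T}\1_{\BB}\bigr]=pR_\h(\lambda)\bigl(1-O(e^{-cN}/R_\h(\lambda))\bigr),
\end{equation*}
using $\P(\BB^c)\leq e^{-cN}$. Setting $S(\lambda)\equiv\sum_{k\geq 0}\wt\E\bigl[e^{-\lambda\theta_{k-1}/T}\prod_{\ell<k}G_\ell\bigr]$, this leads to the identity
\begin{equation*}
R_\s(\lambda)=pR_\h(\lambda)(1+o(1))\,S(\lambda)+O(e^{-cN}/p).
\end{equation*}
To upper-bound $S(\lambda)$ I use the inequality $\wt\E[G_\ell\mid\FF_{\theta_{\ell-1}}]\leq 1-\P(\AA^\ell)\P(\BB^\ell)\leq 1-p(1-e^{-cN})$ and iterate the tower property, obtaining $\wt\E[e^{-\lambda\theta_{k-1}/T}\prod_{\ell<k}G_\ell]\leq(1-p(1-e^{-cN}))^k$. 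Summing the geometric series yields $pS(\lambda)\leq(1-e^{-cN})^{-1}=1+o(1)$, and inserting in the identity gives
\begin{equation*}
R_\s(\lambda)\leq R_\h(\lambda)(1+o(1))+o(1).
\end{equation*}

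The reverse inequality follows by running the entire argument with the roles of $\s$ and $\h$ exchanged: nothing in the coupling construction privileges $\h$ over $\s$, and the recurrence estimate \eqref{hypothesis.1} holds symmetrically. Hence $R_\h(\lambda)\leq R_\s(\lambda)(1+o(1))+o(1)$, which combined with the preceding bound yields $|R_\s(\lambda)-R_\h(\lambda)|=o(1)$. Finally, by convexity of $x\mapsto e^{-\lambda x}$ (Jensen) and Theorem~\ref{pointwise.1} (which ensures $\E_\h\tau_B/T=1+o(1)$),
\begin{equation*}
R_\h(\lambda)\geq\exp(-\lambda\E_\h\tau_B/T)=e^{-\lambda(1+o(1))},
\end{equation*}
bounded away from zero for any fixed $\lambda\geq 0$. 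Dividing, $R_\s(\lambda)/R_\h(\lambda)=1+o(1)$, proving \eqref{eq:pointlaplace}.

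The principal complication is that the cycle identity provides only naturally one-sided control of $R_\s(\lambda)$: the factor $e^{-\lambda\theta_{k-1}/T}\leq 1$ upper-bounds $S(\lambda)$ for free, whereas a matching cycle-by-cycle lower bound would require controlling the accumulated cycle time $\theta_{k-1}$ relative to the metastable scale $T$. The symmetric reapplication of the argument sidesteps this issue entirely.
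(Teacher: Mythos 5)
Your proof is correct and follows essentially the same route as the paper's: both expand $R_\s(\lambda)$ via the cycle decomposition \eqref{eq:tauBDecomp}, bound the failure contribution by its probability, factor the success term using independence of $(\h^k,\underline V^k)$ from $\FF_{\theta_{k-1}}$, sum a geometric series after dropping $e^{-\lambda\theta_{k-1}/T}\leq 1$, and then invoke symmetry in $\s,\h$ together with the Jensen lower bound $R_\h(\lambda)\geq e^{-\lambda(1+o(1))}$ (the paper's \eqref{eq:trivialb}). Your closing remark correctly identifies why the Jensen lower bound is needed — it converts the additive $o(1)$ error from the failure term into a multiplicative one — which is exactly the role it plays (implicitly) in the last line of \eqref{eq:expDecom2}.
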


The proof of Proposition~\ref{prop:pointlaplace} involves some
estimates and computations that we collect in the following lemmas.
\begin{lemma}\label{lemma:eventB}
There exists a~constant, $c>0$, independent of $n$, such that,
for any $\h\in A$,
%
%
\begin{equation}\label{eq:eventBexp}
\E_\h\bigl(\mathbh{1}_{\BB}e^{-({\lambda}/{T})\tau
_B}\bigr)\geq
\E_\h\bigl(e^{-({\lambda}/{T})\tau_B}\bigr)(1-e^{-c N}).
\end{equation}
\end{lemma}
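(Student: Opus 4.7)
The strategy is to upper bound $\E_\h\bigl(\1_{\BB^c}e^{-\lambda\t_B/T}\bigr)$ by an exponentially small quantity and to combine this with a nontrivial (size $O(1)$) lower bound on $\E_\h\bigl(e^{-\lambda\t_B/T}\bigr)$. Since $e^{-\lambda\t_B/T}\leq 1$, every argument used to control $\P_\h(\BB^c)$ in Lemma~\ref{event.8} carries over with no modification to the Laplace-weighted version. This is cleaner than the expectation bound of Lemma~\ref{event.8}, because we do not need to neutralize a factor of $\tau_B$.

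\textbf{Step 1 (Upper bound on the bad event).} Decompose $\1_{\BB^c}$ as in \eqref{proof.20} into the three pieces $\1_{\{\t_B^\h\leq N^\k\}}$, $\1_{\{\t_B^\h>N^\k,\,S\geq N^\k\}}$, and $\1_{\{\t_B^\h>N^\k,\,S<N^\k,\,\NN>M\}}$. Using $e^{-\lambda\t_B/T}\leq 1$ we reduce each contribution to a probability that was already bounded in the proof of Lemma~\ref{event.8}:
\begin{equation*}
\E_\h\bigl(\1_{\{\t_B\leq N^\k\}}e^{-\lambda\t_B/T}\bigr)\leq \P_\h(\t_B\leq N^\k)\leq N^\k e^{-c_1 N},
\end{equation*}
\begin{equation*}
\E_\h\bigl(\1_{\{S\geq N^\k\}}e^{-\lambda\t_B/T}\bigr)\leq \P_\h(S\geq N^\k)\leq e^{-c_4 N^{\k-1}},
\end{equation*}
and similarly $\P_\h(\NN>M)\leq e^{-c_5 N}$ by \eqref{eq:VirginFlips1}. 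Choosing $\k>2$ and $c_2$ large enough, as in the proof of Lemma~\ref{event.8}, gives
\begin{equation*}
\E_\h\bigl(\1_{\BB^c}e^{-\lambda\t_B/T}\bigr)\leq e^{-cN}
\end{equation*}
for some $c>0$ independent of $n$, $\lambda$, and of the configuration $\h\in A$.

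\textbf{Step 2 (Lower bound on the Laplace transform).} To turn the additive bound above into the required multiplicative one, I need to know that $\E_\h e^{-\lambda\t_B/T}$ is not exponentially small. By Jensen's inequality,
\begin{equation*}
\E_\h\bigl(e^{-\lambda\t_B/T}\bigr)\geq\exp\!\left(-\frac{\lambda\,\E_\h\t_B}{T}\right).
\end{equation*}
By Theorem~\ref{pointwise.1}, $\E_\h\t_B/\E_\s\t_B=1+O(e^{-CN/2})$ uniformly in $\s,\h\in A$. Averaging with respect to $\nu_A$ yields $\E_\h\t_B/T=1+o(1)$, and therefore $\E_\h e^{-\lambda\t_B/T}\geq e^{-\lambda(1+o(1))}$, which is bounded below by a constant depending on $\lambda$ but not on $N$.

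\textbf{Step 3 (Combination).} Writing $\E_\h e^{-\lambda\t_B/T}=\E_\h\bigl(\1_\BB e^{-\lambda\t_B/T}\bigr)+\E_\h\bigl(\1_{\BB^c}e^{-\lambda\t_B/T}\bigr)$ and using Steps~1 and~2,
\begin{equation*}
\E_\h\bigl(\1_\BB e^{-\lambda\t_B/T}\bigr)\geq \E_\h\bigl(e^{-\lambda\t_B/T}\bigr)-e^{-cN}\geq \E_\h\bigl(e^{-\lambda\t_B/T}\bigr)\bigl(1-e^{\lambda(1+o(1))}e^{-cN}\bigr),
\end{equation*}
which is the claim after adjusting $c$ by an arbitrarily small amount (absorbing the harmless constant $e^{\lambda(1+o(1))}$). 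The only substantive point is Step~2; there we genuinely use the already-proven Theorem~\ref{pointwise.1} to rule out a catastrophic cancellation that would make $\E_\h e^{-\lambda\t_B/T}$ comparable in size to $e^{-cN}$.
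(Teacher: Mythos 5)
Your proof is correct and follows essentially the same route as the paper's: bound $\E_\h(\1_{\BB^c}e^{-\lambda\t_B/T})$ by $e^{-cN}$ using $e^{-\lambda\t_B/T}\leq 1$ together with the estimate $\P_\h(\BB^c)\leq e^{-cN}$ from Lemma~\ref{event.8}, and lower-bound $\E_\h(e^{-\lambda\t_B/T})$ by an $N$-independent constant via Jensen's inequality and the pointwise control of $\E_\h\t_B/T$ from Theorem~\ref{pointwise.1}. The only cosmetic difference is that you re-derive the $\P_\h(\BB^c)$ bound from scratch rather than citing \eqref{event.10} directly, and you spell out the final algebraic combination that the paper leaves implicit.
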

\begin{pf}
The proof is similar to that of (\ref{eq:BExp}) and uses
some of the estimates given there.
The aim is to prove that
%
%
\begin{equation}\label{eq:ComplB}
\E_\h\bigl(\mathbh{1}_{\BB^c}e^{-({\lambda}/{T})\tau
_B}\bigr)\leq
\E_\h\bigl(e^{-({\lambda}/{T})\tau_B}\bigr)e^{-c N}.
\end{equation}
By Jensen's inequality, for every $\h\in A$,
%
%
\begin{equation}\label{eq:trivialb}
\E_\h\bigl(e^{-({\lambda}/T)\tau_B}\bigr)
\geq
e^{-({\lambda}/T) \E_\h\tau_B}
=
e^{-\lambda}\bigl(1+o(1)\bigr),\vadjust{\goodbreak}
\end{equation}
where the second line follows form the pointwise estimate on
$\E_\h\tau_B$ that was proven in the previous subsections.
To prove (\ref{eq:ComplB}), it is enough
to notice that, by Lemma~\ref{event.8},
%
%
\begin{equation}\label{tra.6}
\E_\h\bigl(\mathbh{1}_{\BB^c}e^{-({\lambda}/{T})\tau
_B}\bigr) \leq e^{-c N}.
\end{equation}
\upqed\end{pf}
\begin{pf*}{Proof of Proposition~\ref{prop:pointlaplace}}
For simplicity, we consider the case when $\un m (\s)=\un m (\h
)\equiv\un m$.
Analogously to (\ref{eq:tauBDecomp}), we obtain
%
%
\begin{eqnarray}\label{eq:expTB}\quad
&&\E_\s\bigl(e^{-({\lambda}/T)\tau_B} \bigr)\nonumber\\[-2pt]
&&\qquad=\wt\E\Biggl(\sum_{k=0}^\infty
e^{-({\lambda}/T)(\theta_{k-1} +\tau_B^{k,\h})}
\mathbh{1}_{{\mathcal A}^{k}}\mathbh{1}_{\BB^{k}}\prod_{\ell
=0}^{k-1}\mathbh{1}_{\DD^{\ell}}
(1- \mathbh{1}_{{\mathcal A}^{\ell}}\mathbh{1}_{\BB^{\ell}})
\Biggr)\nonumber\\[-2pt]
&&\qquad\quad{}+
\wt\E\Biggl(e^{-({\lambda}/T)\tau_B^\s} \sum
_{k=0}^\infty
(1- \mathbh{1}_{\DD^{k}})(1-
\mathbh{1}_{{\mathcal A}^{k}}\mathbh{1}_{\BB^{k}})
\prod_{\ell=0}^{k-1}\mathbh{1}_{\DD^{\ell}}(1- \mathbh
{1}_{{\mathcal A}^{\ell}}
\mathbh{1}_{\BB^{\ell}})\Biggr)
\\[-2pt]
&&\qquad\leq
\sum_{k=0}^\infty
\wt\E\Biggl(e^{-({\lambda}/T)\tau_B^{k,\h}}
\mathbh{1}_{{\mathcal A}^{k}}\mathbh{1}_{\BB^{k}}
\prod_{\ell=0}^{k-1}\mathbh{1}_{\DD^{\ell}}(1- \mathbh
{1}_{{\mathcal A}^{\ell}}\mathbh{1}_{\BB^{\ell}})
\Biggr)\nonumber\\[-2pt]
&&\qquad\quad{}+
\sum_{k=0}^\infty
\wt\E\Biggl( (1- \mathbh{1}_{\DD^{k}})(1-
\mathbh{1}_{{\mathcal A}^{k}}\mathbh{1}_{\BB^{k}})
\prod_{\ell=0}^{k-1}\mathbh{1}_{\DD^{\ell}}(1- \mathbh
{1}_{{\mathcal A}^{\ell}}
\mathbh{1}_{\BB^{\ell}})\Biggr).
\nonumber
\end{eqnarray}

Now, for every $k, \ell\geq0$, as in (\ref{proof.14}),
%
%
\begin{equation}\label{eq:expDecompI}
\wt\E\bigl(\mathbh{1}_{{\mathcal A}^{k}}\mathbh{1}_{\BB^{k}}
e^{-({\lambda}/{T})\tau_B^{k,\h}}
|\FF_{\theta_{k-1}}\bigr) \leq(1-\nu)^M \E_\h\bigl(e^{-({\lambda}/{T})\tau_B}\bigr).
\end{equation}

Moreover, as in (\ref{anton.1001}),
%
%
\begin{eqnarray}\label{anton.1002}
\wt\E\bigl(\mathbh{1}_{\DD^{(\ell)}}(1- \mathbh{1}_{{\mathcal
A}^{(\ell)}}\mathbh{1}_{\BB^{\ell}})|
\FF_{\theta_{\ell-1} }\bigr)
&\leq&
\wt\E(1- \mathbh{1}_{{\mathcal A}^{(\ell)}}\mathbh{1}_{\BB
^{\ell}}|
\FF_{\theta_{\ell-1} })\nonumber\\[-2pt]
&= & 1 - {\mathbb P}({\mathcal A}){\mathbb P}(\BB)\\[-2pt]
&\leq& 1- (1-\nu)^{M}(1 -e^{-c N}).\nonumber
\end{eqnarray}

This last estimate, together with (\ref{event.11})
of Lemma~\ref{event.8}, shows that the term in the last line of (\ref
{eq:expTB})
is smaller than
%
%
\begin{equation}\label{eq:boundexpTB}
\sum_{k=0}^\infty e^{-c N}
\bigl(1- (1-\nu)^{M}(1 - e^{-c N})\bigr)^{k}
\leq2 e^{-N(c-c_2\nu)}.
\end{equation}
Combining these estimates, we arrive at
%
%
\begin{eqnarray}\label{eq:expDecom2}\qquad
\E_\s\bigl(e^{-({\lambda}/T)\tau_B}\bigr)&\leq&
\E_\h\bigl(e^{-({\lambda}/T)\tau_B}\bigr)(1-\nu)^{c_2
N} \sum_{k=0}^\infty
\bigl(1-(1-\nu)^{c_2 N}(1-e^{-c N})\bigr)^k
\nonumber\\[-2pt]
&\leq&
\E_\h\bigl(e^{-({\lambda}/T)\tau_B}\bigr)
(1-e^{-c N})+ 2e^{-N(c-c_2\nu)}
\\[-2pt]
&=& \E_\h\bigl(e^{-({\lambda}/T)\tau_B}\bigr)
\bigl(1 + 3e^{-N(c-c_2\nu)}\bigr),\nonumber
\end{eqnarray}
which tends to $\E_\h(e^{-({\lambda}/T)\tau
_B})$ if $\nu< c/c_2$.\vadjust{\goodbreak}
\end{pf*}

\section{Renewal and the exponential distribution for the RFCW}\label{Sect.4}

We will use the results of Section~\ref{Sect.2} and the notation
introduced therein. In particular, for each $n$ fixed, we set
$A = \cS^n [\un m^*]$ and
$A_\delta= \cS^n [\mathbf A_\delta]$, where $\mathbf A_\delta$ is
the mesoscopic
$\delta$-neighborhood of $\un m^*$. In the sequel we choose $n$ appropriately
large and $\delta$ appropriately small.

In the case of the RFCW model, we prove the convergence of the law
of the normalized metastable time, $\tau_B$, to an exponential
distribution, via convergence of the Laplace transform, $R_\s(\lambda
)$, defined in
(\ref{eq:pointlaplace}).
The proof of the latter is based on renewal arguments.

\subsection{Renewal equations}
By Proposition~\ref{prop:pointlaplace}, instead of studying the
process starting in a~given point, $\s$, for which no exact renewal equation
will hold, it is enough to study the process starting on a~suitable measure
on $A$, for which such a~relation will be shown to hold.
For $\lambda\geq0$, let $\rho_\lambda$ denote the probability
measure on $A$ that
satisfies the equation
%
\begin{equation}\label{renewal.2.bis}
\sum_{\s\in A}\rho_\lambda(\s) \E_\s\bigl(e^{-({\lambda}/T)\tau_A}
\mathbh{1}_{\tau_A<\tau_B}\mathbh{1}_{\s(\tau_A)=\s'}
\bigr)=C(\lambda)\rho_\lambda(\s')
\end{equation}
for all $\s'\in A$, where
%
%
\begin{equation}\label{renewal.3}
C(\lambda)=\E_{\rho_\lambda} \bigl(e^{-({\lambda}/T)\tau_A}
\mathbh{1}_{\tau_A<\tau_B}\bigr).
\end{equation}
Existence and uniqueness of such a~measure follow in a~standard way
from the Perron--Frobenius theorem.

The usefulness of this definition comes from the fact that
the Laplace transform of $\tau_B$ started in this measure satisfies an
exact renewal equation.
\begin{lemma}\label{renewal.4}
Let $R_{\rho_\lambda}(\lambda)= \sum_\s\rho_\lambda(\s)R_\s
(\lambda)$. Then
%
%
\begin{equation}\label{renewal.5}
R_{\rho_\lambda}(\lambda)=\frac{\E_{\rho_\lambda} (
e^{-({\lambda}/T)\tau_B}
\mathbh{1}_{\tau_B<\tau_A})}
{1-\E_{\rho_\lambda}(
e^{-({\lambda}/T)\tau_A}
\mathbh{1}_{\tau_A<\tau_B})}.
\end{equation}
\end{lemma}
\begin{pf}
Using that $1=\mathbh{1}_{\tau_B<\tau_A}+\mathbh{1}_{\tau_A<\tau
_B}$ and the strong Markov property,
we see that
%
%
\begin{eqnarray}\label{renewal.6}
R_{\rho_\lambda}(\lambda)
&=&\E_{\rho_\lambda} \bigl(e^{-({\lambda}/T)\tau_B}
\mathbh{1}_{\tau_B<\tau_A}\bigr)
\nonumber\\
&&{}
+\sum_{\s'\in A}\E_{\rho_\lambda} \bigl(e^{-({\lambda}/T)\tau_A}
\mathbh{1}_{\tau_A<\tau_B}\mathbh{1}_{\s(\tau_A)=\s'}\bigr)\E
_{\s'}e^{-({\lambda}/T)\tau_B}
\\
&=&
\E_{\rho_\lambda} \bigl(e^{-({\lambda}/T)\tau_B}
\mathbh{1}_{\tau_B<\tau_A}\bigr)
+\sum_{\s'\in A} C(\lambda)\rho_\lambda(\s')\E_{\s'}
e^{-({\lambda}/T)\tau_B}.\nonumber
\end{eqnarray}
Equation (\ref{renewal.5}) is now immediate.
\end{pf}

\subsection{Convergence}
As a~result of the representation (\ref{renewal.5}), Theorem \ref
{exponential.1} will follow from
(\ref{eq:pointlaplace}) once we prove the following lemma.\vadjust{\goodbreak}
\begin{lemma}\label{steplemma.1} With the notation from Lemma \ref
{renewal.4}, for any $\lambda\geq0$,
%
%
\begin{equation}
\label{eq:explimit}
\lim_{N\to\infty}
\frac{\E_{\rho_\lambda} (e^{-({\lambda}/T)\tau_B}
\mathbh{1}_{\tau_B<\tau_A})}
{1-\E_{\rho_\lambda}(
e^{-({\lambda}/T)\tau_A}
\mathbh{1}_{\tau_A<\tau_B})} = \frac{1}{1+\lambda} .
\end{equation}
\end{lemma}
\begin{pf}
The proof of this lemma comprises seven steps.

\textsc{Step} 1. Define $T_\lambda= \E_{\rho_\lambda}$. We claim:
\begin{lemma}\label{renewal.13}
There exists $c_6>0$, such that, for any $\lambda\geq0$ fixed,
%
%
\begin{equation}
\label{eq:TlambdaFrac}
T_\lambda= \frac{\E_{\rho_\lambda}\tau_{A\cup B }}
{{\mathbb P}_{\rho_\lambda}(\tau_B <\tau_A)}
\bigl(1 + o (e^{-c_6 N})\bigr).
\end{equation}
\end{lemma}

Indeed,
%
%
\begin{eqnarray}\label{Tlambda}\qquad
T_\lambda&=&
\E_{\rho_\lambda}\bigl(\tau_B\mathbh{1}_{\{\tau_B <\tau
_A\}}\bigr) +
\E_{\rho_\lambda}\bigl(\tau_B\mathbh{1}_{\{\tau_A<\tau_B\}
}\bigr)\nonumber\\
&=&\E_{\rho_\lambda} \tau_{A\cup B}
+
\E_{\rho_\lambda}\bigl(\mathbh{1}_{\{\tau_A <\tau_B\}}\E
_{\s(\tau_A )}\tau_B\bigr)\\
&=& \E_{\rho_\lambda}\tau_{A\cup B} + T_\lambda{\mathbb P}_{\rho
_\lambda}(\tau_A <\tau_B)
+\E_{\rho_\lambda}\bigl(\mathbh{1}_{\{\tau_A <\tau_B\}}\bigl(
\E_{\s(\tau_A )}\tau_B -T_\lambda\bigr)\bigr).\nonumber
\end{eqnarray}
However, by the invariance of $\rho_\lambda$,
%
%
\begin{equation}\label{tt.6}
\E_{\rho_\lambda}\bigl(\mathbh{1}_{\{\tau_A <\tau_B\}}
e^{-{({\lambda}/T)}\tau_A}
\bigl(\E_{\s(\tau_A )}\tau_B -T_\lambda\bigr)\bigr)
= 0.
\end{equation}
It follows that the absolute value of the last term in (\ref{Tlambda})
is bounded above
as
%
%
\begin{eqnarray}
&&\E_{\rho_\lambda}\bigl(\bigl(1 - e^{ -{({\lambda}/{T})}\tau
_A} \bigr)
\mathbh{1}_{\{\tau_A<\tau_B\}}\bigr)
\max_{\s\in A} | \E_\s\tau_B - T_\lambda|\nonumber\\
&&\qquad\leq\lambda
\max_{\s\in A} \biggl| \frac{\E_\s\tau_B - T_\lambda}{T}\biggr|
\E_{\rho_\lambda}\bigl(\tau_A\mathbh{1}_{\{\tau_A<\tau_B\}
} \bigr)\\
&&\qquad=
o (e^{-C N/2} ) \E_{\rho_\lambda}\bigl(\tau_A\mathbh
{1}_{\{\tau_A<\tau_B\}}\bigr) ,
\nonumber
\end{eqnarray}
where we used (\ref{intro.11}) in the last step. This implies the
claim of the
lemma.

\textsc{Step} 2. Control of $\rho_\lambda$-measure.
\begin{lemma}\label{close-measures.1}
There exists $c_7 <\infty$, such that for any
$n$ [and hence $\epsilon= \epsilon(n)$] fixed,
%
%
\begin{equation}
\label{eq:rholmu}
\max_{\s\in A}\frac{\rho_\lambda(\s)}{\mu(\s)/\mu(A )}\leq
e^{c_7\epsilon N}
\end{equation}
as soon as $N$ is large enough.
\end{lemma}
\begin{pf}
In order to prove (\ref{eq:rholmu}), first of all, note that by
reversibility
%
%
\begin{equation}
\label{eq:muAinv}
\sum_{\s'\in A}\mu(\s' ) {\mathbb P}_{\s'}\bigl(\tau_A^r <\tau_B ;
\s(\tau_A^r ) = \s\bigr)
= \mu(\s){\mathbb P}_\s(\tau_A^r <\tau_B),
\end{equation}
where $\tau_A^r$ is the $r$th hitting time of $A$. Assume now that we
are able to prove that there exists $r$ and $M$ such that
%
%
\begin{equation}
\label{eq:threepoints}\qquad
{\mathbb P}_\h\bigl(\tau_A^r <\tau_B ; \s(\tau_A^r ) = \s\bigr)\leq
(1- \epsilon)^{-M}
{\mathbb P}_{\s'}\bigl(\tau_A^r <\tau_B ; \s(\tau_A^r ) = \s\bigr),
\end{equation}
uniformly in $\h,\s,\s'\in A$. In view of (\ref{renewal.2.bis}),
this would
imply
%
%
\begin{eqnarray}
\rho_\lambda(\s) & \leq &\frac{1}{C(\lambda)^r} \sum_{\h} \rho
_\lambda(\h)
{\mathbb P}_\h\bigl(\tau_A^r <\tau_B ; \s(\tau_A^r ) =
\s\bigr)\nonumber\\[-8pt]\\[-8pt]
& \leq &
\frac{(1- \epsilon)^{-M}}{C (\lambda)^r}
{\mathbb P}_{\s'} \bigl(\tau_A^r <\tau_B ; \s(\tau_A^r ) = \s\bigr).
\nonumber
\end{eqnarray}
Multiplying both sides above by $\mu(\s' )$ and applying
(\ref{eq:muAinv}), we conclude that~(\ref{eq:threepoints}) implies
that
%
%
\begin{equation}
\label{eq:MrBound}
\rho(\s)\leq\frac{(1- \epsilon)^{-M}}{C (\lambda)^r}\frac
{\mu(\s)}{\mu(A )}
{\mathbb P}_{\s} (\tau_A^r <\tau_B),
\end{equation}
uniformly in $\s\in A$. The target (\ref{eq:rholmu}), therefore, will
be a~consequence of the following two claims: there exists
$c >0 $, such that, independently of the coarse graining parameter $n$,
%
%
\begin{equation}
\label{eq:ClBound}
C(\lambda)\geq1 - e^{-c N}
\end{equation}
as soon as $N$ is sufficiently large. Furthermore, for sufficiently
large $c_2$ and~$\kappa$, (\ref{eq:threepoints}) holds with
$M= c_2 N$ and $r = N^\kappa$.

We first show that (\ref{eq:ClBound}) holds. By the uniform bound
(\ref{hypothesis.2}) and Jensen's inequality, it follows that
%
%
\begin{eqnarray}\label{tt.40}
C(\lambda) &\geq& (1 - e^{-cN})\sum_\s\rho_\lambda(\s)
\E_\s\bigl(e^{-({\lambda}/T)\tau_A }| \tau_A <\tau
_B\bigr)\nonumber\\
&\geq&
(1 - e^{-cN})
\exp\biggl\{- {\frac{\lambda}{T}}\sum_\s\rho_\lambda(\s)
\E_\s(\tau_A | \tau_A <\tau_B)\biggr\}\\
&\geq&
(1 - e^{-cN})
\exp
\biggl\{- \frac{\lambda\E_{\rho_\lambda}( \tau_A\mathbh
{1}_{\{\tau_A <\tau_B\}})}{T
(1 - e^{-cN})
}\biggr\}.
\nonumber
\end{eqnarray}
By (\ref{eq:TlambdaFrac}),
%
%
\begin{equation}\label{tt.15}
\E_{\rho_\lambda} \bigl(\tau_A\mathbh{1}_{\{\tau_A <\tau
_B\}}\bigr) \leq
\frac{T_\lambda{\mathbb P}_{\rho_\lambda}(\tau_B <\tau_A)}
{1 + {o}(e^{-c_8 N})} ,
\end{equation}
and (\ref{eq:ClBound}) follows by (\ref{hypothesis.2}) and (\ref{intro.11}).

Next, we show that (\ref{eq:threepoints}) holds.
There exists
$c_{8} <\infty$ such that
%
%
\begin{equation}
\label{eq:ssprimeBound}
{\mathbb P}_{\h}(\tau_A^r <\tau_B ; \s_{\tau_A^r}= \s)
\geq e^{-c_{8}N}
{\mathbb P}_{\h}(\tau_A^r <\tau_B),
\end{equation}
uniformly in $\s,\h\in A$.
This is a~rough estimate: by the Markov property,
%
%
\begin{equation}\label{tt.16}\qquad
{\mathbb P}_{\h}(\tau_A^r <\tau_B ; \s_{\tau_A^r}= \s)
\geq{\mathbb P}_{\h}(\tau_A^r <\tau_B)
\min_{\h'\in A} {\mathbb P}_{\h'}(\tau_A <\tau_B ; \s
_{\tau_A }= \s).
\end{equation}
Let $\h'\in A$ and let the Hamming distance between $\s$ and $\h'$
be $K$. Then we can reach $\s$ from $\h'$ by flipping exactly $K$
spins; since this
can be done in~$K!$ orders, and each flip has probability at least
$((1-\alpha)/N)$ by
(\ref{eq:PUpdate}), we see that
%
%
\begin{equation}\label{tt.17}
{\mathbb P}_{\h'}(\tau_A <\tau_B ; \s_{\tau_A }= \s)
\geq\frac{K!}{N^{K}}
(1-\alpha)^{K},
\end{equation}
and (\ref{eq:ssprimeBound}) follows.

Next, let $\h\in A$ and consider a~dynamics starting from $\h$. We
shall try
to couple it with a~dynamics starting from $\s'$ using just one \textit{basic
coupling attempt}. Employing the same notation as in
Section~\ref{section.2.2}, we know [see (\ref{proof.5}) and
(\ref{eq:VirginFlips1})] that for $\kappa>2$ and $M= c_2N$,
%
%
\begin{equation}
\label{eq:c6}
{\mathbb P}_\h(\cS>N^\kappa, \cN> M)\leq e^{- c_9 N},
\end{equation}
where $c_9$ grows linearly with $c_2$. In the sequel, we choose $c_2$
so large that $c_9 $ becomes larger than the constant $c_{8}$ in (\ref
{eq:ssprimeBound}).

Let us redefine the event $\cB$ in (\ref{event.5}) as
$\cB= \{\cS\leq N^\kappa\}\cap
\{\cN\leq M\}$. The coins $V_1,\ldots,V_M$ and the event
$\cA=\{\bigforalll V_i = 1\}$ remain the same.
Consider the enlarged probability
space $(\wt\Omega,\wt{\mathbb P})$ which corresponds to
a single basic coupling
attempt to couple a~dynamics $\s(t)$ from $\s'$ to the dynamics
$\eta(t)$ which starts at~$\eta$.

The coupling is successful if and only if the event $\cA\cap\cB$, which
depends on at most $N^\kappa$ steps, happens. Therefore,
%
%
\begin{eqnarray}\label{trala.1}
&&{\mathbb P}_{\s'}\bigl(\tau_{A}^r <\tau_B ; \s(\tau_{A}^r )=\s
\bigr)\nonumber\\
&&\qquad\geq
\wt{\mathbb P} \bigl( N^\kappa\leq\tau_{A}^r <\tau_B ; \h(\tau
_{A}^r ) =\s;\cA; \cB\bigr)\nonumber\\[-8pt]\\[-8pt]
&&\qquad={\mathbb P}_\h\bigl( N^\kappa\leq\tau_{A}^r
<\tau_B ; \h(\tau_{A}^r ) =\s;\cB\bigr)\wt{\mathbb P} (\cA
)\nonumber\\
&&\qquad= {\mathbb P}_\h\bigl( N^\kappa\leq\tau_{A}^r
<\tau_B ; \h(\tau_{A}^r ) =\s;\cB\bigr)(1-\epsilon)^M.\nonumber
\end{eqnarray}
Now, let us choose $r = N^\kappa$. In particular, the constraint
$N^\kappa\leq\tau_A^r$ becomes redundant. By (\ref{hypothesis.2})
and in view
of (\ref{eq:ssprimeBound}) and our choice of $M$ which
leads to a~large $c_{9}$ in (\ref{eq:c6}), there exists $c_{10}>0$
such that
%
%
\begin{eqnarray}\label{trala.2}
&&{\mathbb P}_\h\bigl(\tau_{A}^r <\tau_B ; \h(\tau_{A}^r ) =\s;\cB
\bigr)\nonumber\\
&&\qquad\geq
{\mathbb P}_\h\bigl(\tau_{A}^r <\tau_B ; \h(\tau_{A}^r ) = \s\bigr)
-{\mathbb P}_\h(\cB^c )
\\
&&\qquad
\geq {\mathbb P}_\h\bigl(\tau_{A}^r <\tau_B ; \h(\tau_{A}^r ) =\s
\bigr)(1 - e^{-c_{10}N}).\nonumber
\end{eqnarray}
Equation (\ref{eq:threepoints}) follows.
\end{pf}

\textsc{Step} 3.
The following crucial bound, to which we refer to a~\textit{uphill lemma},
will be proven
in the next subsection.
\begin{lemma}\label{uphill.1} There exists $c_{11}>0$ such that
%
%
\begin{equation}
\label{eq:rholbound}
\E_{\rho_\lambda}(\tau_B \mathbh{1}_{\tau_B <\tau_A})
\leq e^{-c_{11}N} \E_{\rho_\lambda}(\tau_A \mathbh
{1}_{\tau_A<\tau_B}) .
\end{equation}
%
\end{lemma}
\begin{remark*}
Intuitively, the bound (\ref{eq:rholbound}) should follow from the
decomposition
%
%
\begin{eqnarray}\label{intuition.1}
\E_\s\tau_{A\cup B} &=& {\mathbb P}_\s(\tau_A<\tau_B)
\E_\s(\tau_A|\tau_A<\tau_B)\nonumber\\[-8pt]\\[-8pt]
&&{}+{\mathbb P}_\s(\tau_B<\tau_A) \E_\s(\tau
_B|\tau_B<\tau_A)\nonumber
\end{eqnarray}
since the first probability on the right-hand side is close to one, the second
is exponentially small, and the two conditional expectations should be
of the
same order. It seems, however, remarkably difficult to establish such a~result
uniformly in the starting point $\s\in A$, for the same reasons why the
pointwise control of mean exit times is difficult.
\end{remark*}

We shall proceed with the proof assuming that (\ref{eq:rholbound}) holds.

\textsc{Step} 4.
In view of (\ref{eq:TlambdaFrac}), a~look at (\ref{eq:rholbound})
reveals that
the conditional expectation
%
%
\begin{equation}\label{tt.7}
\frac{\E_{\rho_\lambda}( \tau_B|\tau_B <\tau_A
)}{T} = o (e^{-c_{11} N}) .
\end{equation}
Using that, for $x\geq0$, $1\geq e^{-x}\geq1- x$, it follows
that the
numerator in (\ref{renewal.5}) satisfies
%
%
\begin{equation}\label{doch.1}
\E_{\rho_\lambda} \bigl(e^{-({\lambda}/T)\tau_B}
\mathbh{1}_{\tau_B<\tau_A}\bigr)=
{\mathbb P}_{\rho_\lambda}(\tau_B <\tau_A)\bigl(1+o(e^{-c_{11}N})\bigr).
\end{equation}

\textsc{Step} 5.
Let us turn now to the denominator in (\ref{renewal.5}). We rewrite it as
%
%
\begin{equation}\label{tt.8}
{\mathbb P}_{\rho_\lambda}(\tau_B <\tau_A)\biggl(1 +
\lambda
\frac{\E_{\rho_\lambda}((1 - e^{ -{({\lambda
}/{T})}\tau_A})
\mathbh{1}_{\{\tau_A <\tau_B\}})}{\lambda{\mathbb
P}_{\rho_\lambda}(\tau_B <\tau_A)}\biggr).
\end{equation}
Using (\ref{eq:TlambdaFrac}) for $1/{\mathbb P}_{\rho_\lambda}(
\tau_B <\tau_A)$, we are
left with the computation of
%
%
\begin{equation}
\label{eq:limtocompute}
\frac{T}{\lambda\E_{\rho_\lambda}(\tau_A \mathbh{1}_{\{
\tau_A <\tau_B\}})}
\E_{\rho_\lambda}\bigl( \bigl(1 - e^{ -{({\lambda}/{T})}\tau
_A} \bigr)
\mathbh{1}_{\{\tau_A <\tau_B\}}\bigr).
\end{equation}
Since,
%
%
\begin{equation}\label{tt.9}
\E_{\rho_\lambda} \bigl(\bigl(1 - e^{ -{({\lambda
}/{T})}\tau
_A} \bigr)
\mathbh{1}_{\{\tau_A <\tau_B\}}\bigr) =
\frac{\lambda}{T}\int_0^1\E_{\rho_\lambda}\bigl(e^{-{
({s\lambda}/{T})}\tau_A}
\tau_A \mathbh{1}_{\{\tau_A <\tau_B\}}\bigr)\,\mathtt{d}s ,\hspace*{-35pt}
\end{equation}
we deduce that the expression in (\ref{eq:limtocompute}) belongs to
the interval
%
%
\begin{equation}\label{tt.10}
\biggl[
\frac{\E_{\rho_\lambda} (e^{-{({\lambda}/T)}\tau_A}
\tau_A
\mathbh{1}_{\{\tau_A <\tau_B\}})}{\E_{\rho_\lambda}(
\tau_A
\mathbh{1}_{\{\tau_A <\tau_B\}})} , 1 \biggr].
\end{equation}
The target (\ref{eq:explimit}) follows once we show that
%
%
\begin{equation}
\label{eq:onelimit1}
\lim_{N\to\infty}
\frac{\E_{\rho_\lambda}(e^{ -{({\lambda}/{T})}\tau_A}
\tau_A
\mathbh{1}_{\{\tau_A <\tau_B\}})}{\E_{\rho_\lambda}(
\tau_A
\mathbh{1}_{\{\tau_A <\tau_B\}})} =1 .
\end{equation}
It is clear that (\ref{eq:onelimit1}) follows
as soon as we check that there exists a~sequence
$\alpha_N\downarrow0$, such that
%
%
\begin{equation}
\label{eq:onelimit2}
\lim_{N\to\infty}
\frac{\E_{\rho_\lambda} (\tau_A
\mathbh{1}_{\{\tau_A <\tau_B\}}\mathbh{1}_{\{\tau_A
<\alpha_N T\}})}
{\E_{\rho_\lambda}(\tau_A
\mathbh{1}_{\{\tau_A <\tau_B\}})} =1.
\end{equation}
This will be our next goal.

Let $B_\delta= \cS_N\setminus A_\delta$. Our proof of
(\ref{eq:onelimit2}) is based on the following decomposition:
%
%
\begin{eqnarray}
\E_{\rho_\lambda} \bigl(\tau_A
\mathbh{1}_{\{\tau_A <\tau_B\}}\mathbh{1}_{\{\tau_A
>\alpha_N T\}}\bigr)& \leq &
\E_{\rho_\lambda} \bigl(\tau_A
\mathbh{1}_{\{\tau_A <\tau_{B_\delta}\}}\mathbh{1}_{\{
\tau_A >\alpha_N T\}}\bigr)\nonumber\\
&&{}+ \E_{\rho_\lambda} \bigl(\tau_A
\mathbh{1}_{\{\tau_{B_\delta}< \tau_A <\tau_B\}}\bigr)\\
& \equiv &{\mathrm{I}}_\delta+ {\mathrm{II}}_\delta.
\nonumber
\end{eqnarray}
The logic behind this decomposition should be transparent:
the conditional (on $\tau_A <\tau_B$) landscape should have the global
mesoscopic minima at $A$.
The term $\mathrm{I}_\delta$
is a~local one and should be small, since the dynamics cannot spend
too much time
inside a~local well $A_\delta$ without hitting $A$. On the other hand,
the term
$\mathrm{II}_\delta$ should be small because of the price paid for the
uphill run
toward $B_\delta$ before hitting $A$.
We claim that there exists $\alpha_N\downarrow0$ and $c> 0$ such that
%
%
\begin{equation}
\label{eq:IandIIbound}
\max\{\mathrm{I}_\delta, \mathrm{II}_\delta\}\leq e^{-cN}
\E_{\rho_\lambda} \bigl(\tau_A
\mathbh{1}_{\{\tau_A <\tau_{B }\}}\bigr).
\end{equation}
Evidently, (\ref{eq:onelimit2}) is a~consequence of (\ref{eq:IandIIbound}).

\textsc{Step} 6. Bound on $\mathrm{I}_\delta$. The term $\mathrm
{I}_\delta
$ is bounded
above as
%
%
\begin{equation}
\label{eq:OneWellBound}
\mathrm{I}_\delta\leq\max_{\s\in A}\E_\s\bigl(\tau_{A\cup B_\delta}
\mathbh{1}_{\{\tau_{A\cup B_\delta} > \alpha_N T\}}\bigr).
\end{equation}
The right-hand side of (\ref{eq:OneWellBound}) depends on the dynamics
in a~$\delta$-neighborhood of a~nondegenerate
local minimum $A = \cS^n[\un m^*]$. We try to formalize an intuitive
idea that
such dynamics mixes up on time scale much shorter than $T$ and cannot
afford spending $\alpha_N T$ units of time without hitting $A\cup
B_\delta$. This
is a~somewhat coarse estimate. Let us start with estimating hitting times
from equilibrium measure over mesoscopic slots:
\begin{lemma}\label{local-time.1}
Let $A_\delta$ and $B_\delta$ be as defined above. Then there exists
$c(\delta)$,
satisfying $c(\delta)\downarrow0$, as $\delta\downarrow0$, such that,
for all $\un m'\in\bA_\delta\setminus\un m^*$,
%
%
\begin{equation}\label{local-time.2}
\E_{\nu_{\un m'}} \tau_{A\cup B_\delta}\leq e^{ c(\delta) N},
\end{equation}
where $\nu_{\un m'}$ is
the probability measure on $\cS^n[\un m']$,
which we referred to in (\ref{intro.2}).
\end{lemma}
\begin{pf}
By formula (\ref{intro.2}),
we have that
%
%
\begin{eqnarray}\label{local-time.3}\qquad
\E_{\nu_{\un m'}} \tau_{A\cup B_\delta} &=&\frac
{1}{\capa(\un m',A\cup B_\delta)}
\sum_{\s\in A_\delta\setminus A}\mu_{\beta,N}(\s) h_{\cS^n[\un
m'],\cS[A\cup B_\delta]}(\s)\nonumber\\[-8pt]\\[-8pt]
&\leq&
\frac{1}{\capa(\un m',A)}
\sum_{\s\in A_\delta\setminus A}\mu_{\beta,N}(\s)=\frac{\mu
_{\beta,N}(A\setminus A_\delta)}{\capa(\un m',A)}.\nonumber\vadjust{\goodbreak}
\end{eqnarray}
Note that we used here only the crudest possible estimate on the
harmonic function $h_{\cS^n[\un m'],A\cup B_\delta}(\s)$, but the
results of
\cite{BBI08} do not give us anything much better.
It remains to bound the capacity $\capa(\un m',A)$ from below.
However, this is relatively easy using the methods explained in Section
5 of
\cite{BBI08}, to which we refer for further details. One gets that
%
%
\begin{equation}\label{local-time.4}
\capa(\un m',A)
\geq e^{-c\delta{\varepsilon}N}\mu_{\beta,N}(\un m').
\end{equation}
\upqed\end{pf}

As a~consequence we obtain the following lemma.
\begin{lemma}
\label{lem:LocalTimes}
Let $A_\delta$ and $B_\delta$ be as defined above. Then there exists
$c(\delta)$
satisfying
$c(\delta)\downarrow0$ as $\delta\downarrow0$, such that, for all
$\h\in A_\delta\setminus A$,
%
%
\begin{equation}
\label{eq:LocalTimes}
{\mathbb P}_{\h}\bigl(\tau_{A\cup B_\delta}\leq2 e^{ c(\delta)
N}\bigr)\geq
\frac{(1 - \nu(n ))^{M}}{3},
\end{equation}
where $1-\nu(n)$ is the probability (\ref{2.5}) of a~successful single
coin-flip and $M= c_2 N$ is the number of coins.
\end{lemma}
\begin{pf}
As the formulation of the lemma suggests, we use the basic coupling as described
in the preceding section: let $\un m' \in\mathbf A_\delta$ and
$\eta,\s\in\cS^n[\un m']$. Define the event $\cB$ as in (\ref
{event.5}). In fact,
since we are interested in $\tau_{A\cup B_\delta}$, the first
constraint in
(\ref{event.5}) becomes redundant and we can redefine $\cB$ simply as
%
%
\begin{equation}\label{tt.11}
\cB= \{\cS< N^\kappa\}\cap\{\cN< M\}.
\end{equation}
Then, performing our basic coupling attempt we infer that,
for any $\eta,\s\in\cS[\un m']$,
%
%
\begin{equation}\label{tt.12}
{\mathbb P}_{\h}\bigl(\tau_{A\cup B_\delta}\leq2 e^{ c(\delta)
N}\bigr)\geq
\bigl(1-\nu(n)\bigr)^{M}{\mathbb P}_{\s}
\bigl(\tau_{A\cup B_\delta}\leq e^{ c(\delta) N} ; \cB\bigr).
\end{equation}
By Lemma~\ref{local-time.1} and Chebyshev's inequality
%
%
\begin{equation}\label{tt.13}
{\mathbb P}_{\nu_{\un m'}}\bigl(\tau_{A\cup B_\delta}\leq2 e^{
c(\delta) N}\bigr)
\geq\tfrac12
\end{equation}
and, in view of the bound (\ref{event.10}),
(\ref{eq:LocalTimes}) follows.
\end{pf}

Let us go back to (\ref{eq:OneWellBound}). By
Lemma~\ref{lem:LocalTimes},
%
%
\begin{equation}\label{tt.14}
\max_{\s\in A}
{\mathbb P}_\s\bigl(\tau_{A\cup B_\delta} > k 2e^{c(\delta)N} \bigr)
\leq\biggl(1- \frac{(1 - \nu(n) )^M}{3}
\biggr)^{k} .\vadjust{\goodbreak}
\end{equation}
Therefore, as follows by a~straightforward application of the tail
formula,
%
%
\begin{equation}
\label{eq:Oned}
I_\delta\leq e^{-c_{12} N}
\end{equation}
as soon as
%
%
\begin{equation}
\label{eq:Parameters}
\alpha_N T > 3 c_{13} N e^{( c(\delta) + \nu(n)) N} .
\end{equation}
Since $T\sim e^{CN}$ with $C >0$ being, of course, independent
of our choice of $\delta$ and~$n$, it is always possible to tune the parameters
$\delta$, $n$ and $\alpha_N\downarrow0$ in such a~way that
(\ref{eq:Parameters}) holds.

\textsc{Step} 7. Bound on $\mathrm{II}_\delta$.
Note that
%
%
\begin{eqnarray}
\label{eq:IId}\qquad
\E_{\rho_\lambda} \bigl( \tau_A
\mathbh{1}_{\{\tau_{B_\delta}< \tau_A <\tau_B\}}\bigr)& = &
\E_{\rho_\lambda} \bigl( \tau_{B_\delta}
\mathbh{1}_{\{\tau_{B_\delta}< \tau_A \}}\bigr)\nonumber\\[-8pt]\\[-8pt]
&&{} +
\E_{\rho_\lambda}\bigl(
\mathbh{1}_{\{\tau_{B_\delta}< \tau_A \}}
\E_{\s(\tau_{B_\delta})}\bigl(\tau_A\mathbh{1}_{\{\tau_A <\tau
_B\}}\bigr)\bigr) .
\nonumber
\end{eqnarray}
By the \textit{Uphill lemma} [see (\ref{eq:rholbound}) above]
the first term in (\ref{eq:IId}) is negligible with respect to $\E
_{\rho_\lambda} \tau_A
\mathbh{1}_{\{\tau_A <\tau_{B_\delta}\}}$.
Therefore, the bulk of the remaining work is to find an appropriate upper
bound on the second term in (\ref{eq:IId}).

By the \textit{Downhill lemma} [see (\ref{eq:downhill}) below] we would be
in good shape if we would have the original reversible measure
$\mu$ instead of the $\rho_\lambda$ eigen-measure defined in (\ref
{renewal.2.bis}).
Namely, as it is explained in the end of Section~\ref{sub:Hills},
(\ref{eq:downhill}) implies that, independently
of $n$, there exists $c_\delta>0$ such that
%
%
\begin{equation}
\label{eq:muestimate}
\frac{1}{\mu(A)}\sum_{\s\in A}\mu(\s)
\E_{\s}\bigl(
\mathbh{1}_{\{\tau_{B_\delta}< \tau_A \}}
\E_{\s(\tau_{B_\delta})}\bigl(\tau_A\mathbh{1}_{\{\tau_A <\tau
_B\}}\bigr)\bigr)
\leq e^{-c_\delta{N}}
\end{equation}
as soon as $N$ is large enough. However, since we have already
established in~(\ref{eq:rholmu}) that $\rho_\lambda$ is, up to arbitrary small
exponential corrections,
controlled by $\mu$, it follows that the second term in (\ref
{eq:IId}) is exponentially small
and hence also negligible with respect to $\E_{\rho_\lambda} (
\tau_A
\mathbh{1}_{\{\tau_A <\tau_{B_\delta}\}})$.\vspace*{1pt}

The proof of Lemma~\ref{steplemma.1} is now complete.
\end{pf}

\subsection{Uphill and Downhill lemmas}
\label{sub:Hills}
In this subsection, we shall prove~(\ref{eq:rholbound})
and (\ref{eq:muestimate}).
\begin{pf*}{Proof of Lemma~\ref{uphill.1}}
Instead of proving (\ref{eq:rholbound}) directly, we will first show the
(more natural) estimate
%
%
\begin{equation}\label{eq:pwbound-var}
\sum_{\s\in A}\mu(\s)\E_{\s}(\tau_B \mathbh{1}_{\tau_B
<\tau_A})
\leq e^{-c N} \mu(A)
\end{equation}
for some $c>0$.
To do so, we use the fact that
%
%
\begin{equation}\label{rev.1}
\E_\s\tau_{A\cup B}=\E_\s( \tau_A\mathbh{1}_{\tau_A<\tau
_B})+
\E_\s( \tau_B\mathbh{1}_{\tau_B<\tau_A}).
\end{equation}
Define the function
%
%
\begin{equation}\label{rev.2}
w_{A,B}(\s)\equiv\cases{\E_\s( \tau_A\mathbh{1}_{\tau
_A<\tau_B}),&\quad
if $\s\notin A\cup B$,\cr
0,&\quad else,}
\end{equation}
$w_{A,B}$ solves
the Dirichlet problem
%
%
\begin{eqnarray}\label{rev.3}
L w_{A,B} (\s)&=&h_{A,B}(\s),\qquad \s\notin A\cup B,\\
w_{A,B}(\s)&=&0, \s\in A\cup B,
\end{eqnarray}
where $L\equiv1-P$.
Notice that, for $\s\in A$,
%
%
\begin{equation}\label{rev.4}
\E_\s( \tau_A\mathbh{1}_{\tau_A<\tau_B})={\mathbb
P}_\s(\tau_A<\tau_B)
-Lw_{A,B}(\s).
\end{equation}
Next, using reversibility,
%
%
\begin{equation}\label{rev.5}
\sum_{\s} \mu(\s) h_{A,B}(\s) Lw_{A,B}(\s)=\sum_\s
\mu(\s) Lh_{A,B}(\s) w_{A,B}(\s).
\end{equation}
By the properties of the functions $h_{A,B}$ and $w_{A,B}$, this
equation reduces to
%
%
\begin{equation}\label{rev.6}
-\sum_{\s\in A}\mu(\s)Lw_{A,B}(\s) =\sum_{\s\notin A\cup B}\mu
(\s)
h_{A,B}(\s)^2.
\end{equation}
Hence,
%
%
\begin{eqnarray}\label{rev.7}
\sum_{\s\in A}\mu(\s)
\E_\s( \tau_A\mathbh{1}_{\tau_A<\tau_B})&=& \sum_{\s
\in A}
\mu(\s){\mathbb P}_\s(\tau_A<\tau_B)\nonumber\\[-8pt]\\[-8pt]
&&{}+\sum_{\s\notin A\cup B}\mu(\s)
h_{A,B}(\s)^2.\nonumber
\end{eqnarray}
Using a~completely similar procedure, one shows that
%
%
\begin{equation}\label{rev.8}
\sum_{\s\in A}\mu(\s)
\E_\s( \tau_{A\cup B})= \sum_{\s\in A} \mu(\s)
+\sum_{\s\notin A\cup B}\mu(\s)
h_{A,B}(\s).
\end{equation}
Therefore, taking into account (\ref{rev.1}),
%
%
\begin{eqnarray}\label{rev.9}
\sum_{\s\in A}\mu(\s)\E_\s(\tau_B\mathbh{1}_{\tau_B<\tau
_A})
&=& \sum_{\s\in A}\mu(\s) {\mathbb P}_\s(\tau_B< \tau
_A)\nonumber\\[-8pt]\\[-8pt]
&&{} +\sum_{\s\notin A\cup B}\mu(\s) h_{A,B}(\s)h_{B,A}(\s).\nonumber
\end{eqnarray}
The first term on the right-hand side is exponentially small compared to~$\mu(A)$
by Lem\-ma~\ref{lemma:hypothesis}. The same holds true for the second term,
by the
same estimates that were used in the proof of Lemmas 6.1 and 6.2 in
\cite{BBI08}.
Thus~(\ref{eq:pwbound-var}) holds. By Lemma~\ref{close-measures.1},
it follows that for a~slightly smaller constant~$c'$,
$\E_{\rho_\lambda}(\tau_B\mathbh{1}_{\tau_B<\tau_A})
\leq e^{-c'N}$. Finally,
$\E_{\rho_\lambda}\tau_{A\cup B}\geq1$ and so
$\E_{\rho_\lambda}(\tau_A\mathbh{1}_{\tau_A<\tau_B})
\geq1-e^{-c'N}$, and we can deduce (\ref{eq:rholbound}).
This concludes the proof of the lemma.~%
\end{pf*}

The microscopic harmonic function $h(\s)\equiv{\mathbb P}(\tau_A
<\tau_B)$ gives
rise to the so-called $h$-transformed chain with transition probabilities
%
%
\begin{equation}\label{h-trasfTP}
p^h_N (\s,\s^\prime) = h (\s)^{-1}p_N (\s,\s^\prime) h(\s
^\prime).
\end{equation}
This
$h$-transformed chain lives on $\{\s\dvtx h (\s)>0\}$
and it is reversible with respect to $\mu^h \equiv h^2\mu$. The following
\textit{Downhill lemma} holds.
\begin{lemma}\label{downhill.1}
With the notation introduced before,
%
%
\begin{eqnarray}
\label{eq:downhill}
&&\sum_{\s\in A}\mu(\s)\E_\s\bigl(\mathbh{1}_{\{\tau
_{B_\delta} <\tau_A\}}
\E_{\s\tau_{B_\delta}}\bigl(\tau_A\mathbh{1}_{\{\tau_A
<\tau_B \}}\bigr)\bigr)\nonumber\\[-8pt]\\[-8pt]
&&\qquad\leq
\sum_{\s'\in A^\delta\setminus A}\mu^h (\s' ){\mathbb P}^h_{\s'
}(\tau_{B_\delta} <\tau_A).
\nonumber
\end{eqnarray}
\end{lemma}
\begin{pf}
By reversibility,
%
%
\begin{equation}\label{tt.2}\quad
\mu(\s)\E_\s\bigl(\mathbh{1}_{\{\tau_{B_\delta} <\tau_A\}
}\mathbh{1}_{\{\s(\tau_{B_\delta})
=\h\}}\bigr)
= \mu(\h)\E_\h\bigl(\mathbh{1}_{\{\tau_A <\tau_{B_\delta
}\}}\mathbh{1}_{\{\s(\tau_{A})=\s\}}\bigr).
\end{equation}
Hence,
%
%
\begin{eqnarray}\label{tt.3}
&&\sum_{\s\in A}\mu(\s)\E_\s\bigl(\mathbh{1}_{\{\tau
_{B_\delta} <\tau_A\}}
\E_{\s(\tau_{B_\delta})}\bigl(\tau_A\mathbh{1}_{\{\tau_A <\tau
_B \}}\bigr)\bigr)\nonumber\\[-8pt]\\[-8pt]
&&\qquad =
\sum_{\h\in B_\delta}\mu(\h) {\mathbb P}_\h(\tau_A <\tau
_{B_\delta})
\E_\h\bigl(\tau_A\mathbh{1}_{\{\tau_A <\tau_B\}}\bigr) .
\nonumber
\end{eqnarray}
Since the only nonzero contribution to the latter sum comes from
$\eta$ in the exterior boundary of $A_\delta$,
we can bound it from above in terms of the $h$-transformed
quantities as
%
%
\begin{equation}\label{tt.23}
\sum_{\h\in B_\delta}\mu^h (\h)
{\mathbb P}_\h^h(\tau_A <\tau_{B_\delta})
\E_\h^h\tau_A .
\end{equation}
Applying the representation formula (\ref{intro.2}) for hitting times
for the $h$-trans\-formed dynamics,
we can represent
the above sum as
%
%
\begin{equation}\label{tt.4}
\sum_{\s' \in A_\delta\setminus A} \mu^h (\s' ){\mathbb P}_{\s
'}^h (\tau_{B_\delta}<\tau_A),
\end{equation}
and (\ref{eq:downhill}) follows.
\end{pf}

Let us go back to (\ref{eq:muestimate}).
Using an estimate completely analogous to Lem\-ma~\ref{lemma:hypothesis},
one sees that
%
%
\begin{eqnarray}\label{tt.444}\qquad
\sum_{\s' \in A_\delta\setminus A} \mu^h (\s' ){\mathbb P}_{\s
'}^h (
\tau_{B_\delta}<\tau_A)&\leq&
\sum_{\s' \in A_\delta\setminus A} \mu(\s' )h (\s' ){\mathbb
P}_{\s'} (
\tau_{B_\delta}<\tau_A)\nonumber\\[-8pt]\\[-8pt]
&\leq&\mu(A_\delta\setminus A)e^{-c_\delta{N}}
\nonumber
\end{eqnarray}
for some $c_\delta>0$.
This allows us to deduce (\ref{eq:muestimate}) from
(\ref{eq:downhill}).

\section*{Acknowledgments}
We thank Malwina Luczak for stimulating
discussions on~\cite{LLP08} and coupling methods in general.
We are grateful to Martin Slowik for pointing out an error in
a previous version and for suggesting the proof of Lemma
\ref{uphill.1}.

The kind hospitality of the Technion, Haifa, the Weierstrass-Institute
for Applied Analysis and Stochastics, and the Institute of Applied
Mathematics at Bonn University is gratefully acknowledged.

%

%
\printaddresses


\begin{thebibliography}{15}

\bibitem{BBI08}
%
\begin{barticle}[mr]
\bauthor{\bsnm{Bianchi},~\bfnm{Alessandra}\binits{A.}},
\bauthor{\bsnm{Bovier},~\bfnm{Anton}\binits{A.}} \AND
\bauthor{\bsnm{Ioffe},~\bfnm{Dmitry}\binits{D.}}
(\byear{2009}).
\btitle{Sharp asymptotics for metastability in the random field {C}urie--{W}eiss
model}.
\bjournal{Electron. J. Probab.}
\bvolume{14}
\bpages{1541--1603}.
\bid{mr={2525104}}
\end{barticle}
%
\endbibitem

\bibitem{Bo5}
%
\begin{bincollection}[mr]
\bauthor{\bsnm{Bovier},~\bfnm{Anton}\binits{A.}}
(\byear{2009}).
\btitle{Metastability}.
In \bbooktitle{Methods of Contemporary Mathematical Statistical Physics}.
\bseries{Lecture Notes in Math.}
\bvolume{1970}
\bpages{177--221}.
\bpublisher{Springer}, \baddress{Berlin}.
\bid{mr={2581606}}
\end{bincollection}
%
\endbibitem

\bibitem{BdHS08}
%
\begin{barticle}[auto:STB|2010-11-18|09:18:59]
\bauthor{\bsnm{Bovier},~\bfnm{A.}\binits{A.}}, \bauthor{\bparticle{den}
\bsnm{Hollander},~\bfnm{F.}\binits{F.}} \AND
\bauthor{\bsnm{Spitoni},~\bfnm{C.}\binits{C.}}
(\byear{2010}).
\btitle{Homogeneous nucleation for Glauber and Kawasaki dynamics in large
volumes and low temperature}.
\bjournal{Ann. Probab.}
\bvolume{38}
\bpages{661--713}.
\end{barticle}
%
\endbibitem

\bibitem{BEGK01}
%
\begin{barticle}[mr]
\bauthor{\bsnm{Bovier},~\bfnm{Anton}\binits{A.}},
\bauthor{\bsnm{Eckhoff},~\bfnm{Michael}\binits{M.}},
\bauthor{\bsnm{Gayrard},~\bfnm{V{\'e}ronique}\binits{V.}} \AND
\bauthor{\bsnm{Klein},~\bfnm{Markus}\binits{M.}}
(\byear{2001}).
\btitle{Metastability in stochastic dynamics of disordered mean-field models}.
\bjournal{Probab. Theory Related Fields}
\bvolume{119}
\bpages{99--161}.
\bid{doi={10.1007/PL00012740}, mr={1813041}}
\end{barticle}
%
\endbibitem

\bibitem{BEGK02}
%
\begin{barticle}[mr]
\bauthor{\bsnm{Bovier},~\bfnm{Anton}\binits{A.}},
\bauthor{\bsnm{Eckhoff},~\bfnm{Michael}\binits{M.}},
\bauthor{\bsnm{Gayrard},~\bfnm{V{\'e}ronique}\binits{V.}} \AND
\bauthor{\bsnm{Klein},~\bfnm{Markus}\binits{M.}}
(\byear{2002}).
\btitle{Metastability and low lying spectra in reversible {M}arkov chains}.
\bjournal{Comm. Math. Phys.}
\bvolume{228}
\bpages{219--255}.
\bid{doi={10.1007/s002200200609}, mr={1911735}}
\end{barticle}
%
\endbibitem

\bibitem{BEGK04}
%
\begin{barticle}[mr]
\bauthor{\bsnm{Bovier},~\bfnm{Anton}\binits{A.}},
\bauthor{\bsnm{Eckhoff},~\bfnm{Michael}\binits{M.}},
\bauthor{\bsnm{Gayrard},~\bfnm{V{\'e}ronique}\binits{V.}} \AND
\bauthor{\bsnm{Klein},~\bfnm{Markus}\binits{M.}}
(\byear{2004}).
\btitle{Metastability in reversible diffusion processes. {I}. {S}harp
asymptotics for capacities and exit times}.
\bjournal{J. Eur. Math. Soc. (JEMS)}
\bvolume{6}
\bpages{399--424}.
\bid{doi={10.4171/JEMS/14}, mr={2094397}}
\end{barticle}
%
\endbibitem

\bibitem{CGOV}
%
\begin{barticle}[mr]
\bauthor{\bsnm{Cassandro},~\bfnm{Marzio}\binits{M.}},
\bauthor{\bsnm{Galves},~\bfnm{Antonio}\binits{A.}},
\bauthor{\bsnm{Olivieri},~\bfnm{Enzo}\binits{E.}} \AND
\bauthor{\bsnm{Vares},~\bfnm{Maria~Eul{\'a}lia}\binits{M.~E.}}
(\byear{1984}).
\btitle{Metastable behavior of stochastic dynamics: A pathwise approach}.
\bjournal{J. Stat. Phys.}
\bvolume{35}
\bpages{603--634}.
\bid{mr={0749840}}
\end{barticle}
%
\endbibitem

\bibitem{FW}
%
\begin{bbook}[mr]
\bauthor{\bsnm{Freidlin},~\bfnm{M.~I.}\binits{M.~I.}} \AND
\bauthor{\bsnm{Wentzell},~\bfnm{A.~D.}\binits{A.~D.}}
(\byear{1984}).
\btitle{Random Perturbations of Dynamical Systems}.
\bseries{Grundlehren der Mathematischen Wissenschaften [Fundamental Principles
of Mathematical Sciences]}
\bvolume{260}.
\bpublisher{Springer}, \baddress{New York}.
\bid{mr={0722136}}
\end{bbook}
%
\endbibitem

\bibitem{LLP08}
%
\begin{barticle}[mr]
\bauthor{\bsnm{Levin},~\bfnm{David~A.}\binits{D.~A.}},
\bauthor{\bsnm{Luczak},~\bfnm{Malwina~J.}\binits{M.~J.}} \AND
\bauthor{\bsnm{Peres},~\bfnm{Yuval}\binits{Y.}}
(\byear{2010}).
\btitle{Glauber dynamics for the mean-field {I}sing model: Cut-off, critical
power law, and metastability}.
\bjournal{Probab. Theory Related Fields}
\bvolume{146}
\bpages{223--265}.
\bid{doi={10.1007/s00440-008-0189-z}, mr={2550363}}
\end{barticle}
%
\endbibitem

\bibitem{MOS89}
%
\begin{barticle}[mr]
\bauthor{\bsnm{Martinelli},~\bfnm{Fabio}\binits{F.}},
\bauthor{\bsnm{Olivieri},~\bfnm{Enzo}\binits{E.}} \AND
\bauthor{\bsnm{Scoppola},~\bfnm{Elisabetta}\binits{E.}}
(\byear{1989}).
\btitle{Small random perturbations of finite- and infinite-dimensional
dynamical systems: Unpredictability of exit times}.
\bjournal{J. Stat. Phys.}
\bvolume{55}
\bpages{477--504}.
\bid{mr={1003525}}
\end{barticle}
%
\endbibitem

\bibitem{MS88}
%
\begin{barticle}[mr]
\bauthor{\bsnm{Martinelli},~\bfnm{Fabio}\binits{F.}} \AND
\bauthor{\bsnm{Scoppola},~\bfnm{Elisabetta}\binits{E.}}
(\byear{1988}).
\btitle{Small random perturbations of dynamical systems: Exponential
loss of
memory of the initial condition}.
\bjournal{Comm. Math. Phys.}
\bvolume{120}
\bpages{25--69}.
\bid{mr={0972542}}
\end{barticle}
%
\endbibitem

\bibitem{MP}
%
\begin{barticle}[mr]
\bauthor{\bsnm{Mathieu},~\bfnm{P.}\binits{P.}} \AND
\bauthor{\bsnm{Picco},~\bfnm{P.}\binits{P.}}
(\byear{1998}).
\btitle{Metastability and convergence to equilibrium for the random field
{C}urie--{W}eiss model}.
\bjournal{J. Stat. Phys.}
\bvolume{91}
\bpages{679--732}.
\bid{doi={10.1023/A:1023085829152}, mr={1632726}}
\end{barticle}
%
\endbibitem

\bibitem{nummel}
%
\begin{barticle}[mr]
\bauthor{\bsnm{Nummelin},~\bfnm{E.}\binits{E.}}
(\byear{1978}).
\btitle{A splitting technique for {H}arris recurrent {M}arkov chains}.
\bjournal{Z.~Wahrsch. Verw. Gebiete}
\bvolume{43}
\bpages{309--318}.
\bid{mr={0501353}}
\end{barticle}
%
\endbibitem

\bibitem{OV}
%
\begin{bbook}[mr]
\bauthor{\bsnm{Olivieri},~\bfnm{Enzo}\binits{E.}} \AND
\bauthor{\bsnm{Vares},~\bfnm{Maria~Eul{\'a}lia}\binits{M.~E.}}
(\byear{2005}).
\btitle{Large Deviations and Metastability}.
\bseries{Encyclopedia of Mathematics and Its Applications}
\bvolume{100}.
\bpublisher{Cambridge Univ. Press}, \baddress{Cambridge}.
\bid{doi={10.1017/CBO9780511543272}, mr={2123364}}
\end{bbook}
%
\endbibitem

\end{thebibliography}
\end{document}